\renewcommand{\left}{\mleft} 
\renewcommand{\right}{\mright}
\theoremstyle{plain}
\newtheorem{theorem}{Theorem}[section] %
\newtheorem{lemma}[theorem]{Lemma} %
\theoremstyle{definition}
\newtheorem{definition}[theorem]{Definition} %
\newtheorem{assumption}[theorem]{Assumption} %
\newtheorem{example}{Example} %
\theoremstyle{remark} %
\numberwithin{equation}{section}
\newcommand{\norm}[1]{\left\Vert#1\right\Vert} 
\newcommand{\normF}[1]{\left\Vert#1\right\Vert_F} 
\newcommand{\abs}[1]{{\left |#1\right |}}
\newcommand{\card}{\mathrm{card}} 
\newcommand{\R}{\mathbb{R}} 
\renewcommand{\P}{\mathbb{P}} 
\newcommand{\E}{\mathbb{E}} 
\newcommand{\kbf}{\kappa_{\mathrm{bf}}}
\newcommand{\kbv}{\kappa_{\mathrm{bv}}}
\newcommand{\kbmj}{\kappa_{\mathrm{bmj}}}
\newcommand{\kej}{\kappa_{\mathrm{ej}}}
\newcommand{\klf}{\kappa_{\mathrm{lf}}}
\newcommand{\klj}{\kappa_{\mathrm{lj}}}
\newcommand{\klg}{\kappa_{\mathrm{lg}}}
\newcommand{\Jmk}{J_{m_k}}
\newcommand{\ITk}{\mathbf{1}_{T_k}}
\begin{document}
\title[Derivative-free LM algorithm via $ \ell_1 $ minimization]{A derivative-free Levenberg-Marquardt method for sparse nonlinear least squares problems}

\date{}

\author{Yuchen Feng}
\address{School of Mathematical Sciences, Shanghai Jiao Tong University, Shanghai 200240, China}
\email{fengyuchen@sjtu.edu.cn}

\author{Chuanlong Wang}
\address{Shanxi Key Laboratory for Intelligent Optimization Computing and Block-chain Technology, Department of Mathematics, Taiyuan Normal University, Jinzhong 030619, Shanxi Province, China}
\email{clwang1964@163.com}

\author{Jinyan Fan*}
\address{School of Mathematical Sciences, and MOE-LSC, Shanghai Jiao Tong University, Shanghai 200240, China}
\email{jyfan@sjtu.edu.cn}

\thanks{* Corresponding author}

\thanks{The authors are supported by the Science and Technology Commission of Shanghai Municipality
	grant 22JC1401500, the National Natural Science Foundation of China grants 12371307 and 12371381, and
	the Fundamental Research Funds for the Central Universities.}

\subjclass[2010]{65K10, 90C56}
\keywords{Sparse nonlinear least squares problems, derivative-free optimization, Levenberg-Marquardt method, sparse recovery, global convergence}

\begin{abstract}
	This paper studies sparse nonlinear least squares problems, where the Jacobian matrices are unavailable or expensive to compute, yet have some underlying sparse structures.
	We construct the Jacobian models by the $ \ell_1 $ minimization subject to a small number of interpolation constraints with interpolation points generated from some certain distributions,
	and propose a derivative-free Levenberg-Marquardt algorithm based on such Jacobian models.
	It is proved that the Jacobian models are probabilistically first-order accurate
	and the algorithm converges globally almost surely.
	Numerical experiments are presented to show the efficiency of the proposed algorithm for sparse nonlinear least squares problems.
\end{abstract}

\maketitle

\section{Introduction}
Nonlinear least squares problems arise frequently in areas of engineering, economy, machine learning and so on.
	In this paper, we consider the nonlinear least squares problems
	\begin{equation}\label{eq: nonlinear least squares problems}
		\min_{x\in\R^n} \ f(x)=\frac{1}{2}\norm{F(x)}^2 = \frac{1}{2}\sum_{i=1}^{m}\big(F^{(i)}(x)\big)^2,
	\end{equation}
	where $ F(x): \R^n \rightarrow  \R^m $ is continuously differentiable, yet its Jacobian $ J(x): \R^n\rightarrow \R^{m\times n} $ is either unavailable or expensive to compute but has some underlying sparse structure. Thus, the nonlinear least squares methods that rely on exact Jacobian matrices are not applicable anymore.

For derivative-free nonlinear least squares problems,
Brown et al. \cite{brown1971DerivativeFreeAnalogues} gave finite difference analogues of the Levenberg-Marquardt (LM) method and Gauss-Newton method.
Some model-based methods were also developed.
For example, Zhang et al. \cite{zhang2012LocalConvergenceDerivativefree} built quadratic interpolation models for each residual function $F^{(i)}$ and presented a framework of derivative-free algorithms within a trust region framework, while Cartis et al. \cite{cartis2019DerivativefreeGaussNewton,cartis2019ImprovingFlexibilityRobustness} constructed linear interpolation/regression models for each $F^{(i)}$ and presented a derivative-free version of the Gauss-Newton method.
Different from the above deterministic methods, probabilistic methods have attracted increasing attention in recent years.
Bergou et al. \cite{Bergou2016LM} presented a LM method based on probabilistic gradient models.
Zhao et al. \cite{zhao2022LevenbergMarquardtMethod} established linear interpolation approximation for each $F^{(i)}$ with the interpolation points generated by the standard Gaussian distribution, and showed that the Jacobian models obtained are probabilistically first-order accurate and the LM method based on such Jacobian models converges to a first-order stationary point of the objective function almost surely.
Chen et al. \cite{chen2024GlobalComplexityDerivativeFree} built the probabilistically first-order accurate Jacobian models via orthogonal spherical smoothing and proved that the LM method based on such Jacobian models also converges globally almost surely.
In \cite{cartis2023ScalableSubspaceMethods}, Cartis et al. introduced a general framework for large-scale model-based derivative-free optimization based on iterative minimization within random subspaces.
The framework is then specialized to nonlinear least-squares problems, which is based on the Gauss-Newton method and constructs local linear interpolation models to approximate the Jacobian.

In some practical applications, each residual function $F^{(i)}$ involves only a small fraction of variables, implying a sparse structure in the Jacobian matrix. Typical problems of this kind are large scale geodesic adjustment \cite{golub1980LargescaleGeodeticLeastsquares}, least squares network adjustment \cite{teunissen2000AdjustmentTheoryIntroduction}, wireless sensor network localization \cite{Mao2007WirelessSN}, etc.
Curtis et al. \cite{curtis1974EstimationSparseJacobian} showed how to reduce the work required to estimate the Jacobian matrix by finite differences when it is sparse with a known sparsity pattern.
An et al. \cite{an2007GloballyConvergentNewtonGMRES} presented an inexact Newton methods with quasi-conjugate-gradient backtracking for large sparse systems of nonlinear equations.
Kreji{\'c} et al. \cite{krejic2023SplitLevenbergMarquardtMethod} proposed a decoupling procedure which splits the original problems into a sequence of independent problems of smaller sizes and developed a modification of the LM method with smaller computational costs. However, all these methods require either the sparsity pattern of the Jacobian matrices or the exact derivatives.

In this paper, we focus on nonlinear least squares problems, where the Jacobian matrices are unavailable and possess some underlying sparse structures but the sparsity patterns are not known.
We construct the Jacobian models based on compressed sensing and propose a derivative-free LM algorithm for solving it.

First, we show how to construct the sparse Jacobian models by linear interpolation and
$\ell_1 $ minimization, then propose a derivative-free LM algorithm based on such Jacobian models in Section \ref{sec: algorithm}.
Second, we introduce the probabilistically bounded RIP matrices (i.e., satisfying the restricted isometry property), and prove that the sparse Jacobian models constructed are first-order accurate if the interpolation points are generated appropriately, such as from Gaussian distribution, Bernoulli distribution and Bernoulli-like distribution.
These results are given in Section \ref{sec: Jacobian models}.
Third, we study convergence properties of the algorithm and
show each accumulation point of the sequence generated by the algorithm is a stationary point of the objective function with probability one in Section \ref{sec: convergence}.
In Section \ref{sec: numerical results}, we present numerical experiments for solving sparse nonlinear least squares problems. Numerical results indicate that the number of interpolation points can be much less than the problem dimension if the Jacobian sparsity level is high.
Some preliminaries in compressed sensing are given in Section \ref{sec: Jacobian models}.

Notations: For a vector $ x\in\R^n $, $ \|x\|_0 $ is the number of nonzero entries, $ \|x\|_1 =\sum_{i=1}^{n}|x^{(i)}| $ is the $ \ell_1 $ norm, and $ \|x\| $ is the standard Euclidean norm. For a matrix $ X\in\R^{m\times n} $, $ \|X\| $ is the spectral norm, and $ \|X\|_F $ is the Frobenius norm. $ B(x,r) $ denotes the closed ball $ B(x,r):=\{y\in\R^n \mid \norm{y-x}\leq r\} $.

\section{Derivative-free LM algorithm for sparse nonlinear least squares problems}\label{sec: algorithm}

In this section, we show how to construct the sparse Jacobian models by linear interpolation and $\ell_1 $ minimization, and then propose a LM algorithm for sparse nonlinear least squares problems based on such Jacobian models.


At the $k$-th iteration, we generate $p$ random vectors $v^1_k,\ldots,v^p_k\in \R^n$ with $p<n$ and take the set of interpolation points
\begin{equation}
	\{x_k+\sigma_k v^1_k,\ldots,x_k+\sigma_k v^p_k\},
\end{equation}
where $\sigma_k>0$ is updated at each iteration and controls the distance from the interpolation points to $x_k$. Throughout the paper, we denote $ v_k = \{v^1_k,\ldots,v^p_k\}$.

	For each residual function $F^{(i)}(x), i = 1, \ldots, m$, construct its linear interpolation model $ m_k^{(i)}(x)=c_k^{(i)}+(g_k^{(i)})^T(x-x_k) $ that satisfies
	\begin{subequations}\label{eq: interp condition}
		\begin{align}
			m_k^{(i)}(x_k) & =F^{(i)}(x_k), \label{eq: interp condition a} \\
			m_k^{(i)}(x_k+\sigma_k v_k^j) & =F^{(i)}(x_k+\sigma_k v_k^j),\quad j = 1,\ldots,p. \label{eq: interp condition b}
		\end{align}
	\end{subequations}
	Substituting \eqref{eq: interp condition a} into \eqref{eq: interp condition b} gives
	\begin{equation}\label{eq: interp condition shifted}
		(g_k^{(i)})^T(\sigma_k v_k^j) = F^{(i)}(x_k+\sigma_k v_k^j)-F^{(i)}(x_k),\quad j =1,\ldots,p.
	\end{equation}
	Denote
	\begin{equation}\label{eq: interp condition matrix form}
		A(v_k)=\begin{bmatrix}
			(v^1_k)^T \\ \vdots \\ (v^p_k)^T
		\end{bmatrix}\in \R^{p\times n}, \quad y_k^{(i)}=\begin{bmatrix}
			F^{(i)}(x_k+\sigma_k v_k^1) -F^{(i)}(x_k) \\
			\vdots \\
			F^{(i)}(x_k+\sigma_k v_k^p) -F^{(i)}(x_k)
		\end{bmatrix}\in \R^{p}.
	\end{equation}
	\eqref{eq: interp condition shifted} can be written as
	\begin{equation}
		A(v_k) g_k^{(i)} = \sigma_k^{-1}y_k^{(i)}, \quad i=1,\ldots,m.
	\end{equation}

Note that the Jacobians of the problem \eqref{eq: nonlinear least squares problems} are sparse.
One can solve the $ \ell_1 $ minimization problem
\begin{equation}\label{prob: l1-Jacobian-noisy}
	\begin{aligned}
		\min_{g\in\R^n}\ & \norm{g}_1 \\
		\text{s.t.}\ & \norm{A(v_k)g - \sigma_k^{-1}y_k^{(i)} }\leq\xi_k,
	\end{aligned}
\end{equation}
to obtain $ g_k^{(i)}$, where $\xi_k$ is a positive number, and take the Jacobian model $\Jmk$ as
\begin{equation}\label{jacobianmodel}
	\Jmk=\begin{bmatrix}
		\big(g^{(1)}_k\big)^T \\ \vdots \\ \big(g^{(m)}_k\big)^T
	\end{bmatrix}\in \R^{m\times n}.
\end{equation}
Then $ \Jmk $ can serve as an approximation of $ J(x_k) $. The choices of $p,~ v_k,~\sigma_k$ and $\xi_k$ will be discussed in Section \ref{sec: Jacobian models}.

Denote $ F_k = F(x_k)$. We solve the system of linear equations
\begin{equation}\label{eq: lm equation}
	(\Jmk^T\Jmk+\lambda_kI)d=-\Jmk^TF_k \text{ with } \lambda_k=\theta_k\norm{\Jmk^TF_k}
\end{equation}
to obtain the trial step $ d_k$, where $\lambda_k$ is the LM parameter with $\theta_k$ being updated from iteration to iteration.
Define the ratio of the actual reduction to the predicted reduction of the objective function as
\begin{equation}\label{eq: ratio}
	\rho_k=\frac{\mathrm{Ared}_k}{\mathrm{Pred}_k}=\frac{\norm{F_k}^2-\norm{F(x_k+d_k)}^2}{\norm{F_k}^2-\norm{F_k+\Jmk d_k}^2}.
\end{equation}
It plays an important role in deciding whether $ d_k $ is acceptable and how to update $ \theta_k $.
If $\rho_k$ is smaller than a small positive constant, we reject $d_k$ and increase $\theta_k$. Otherwise we accept $d_k$ and update $\theta_k$ as in \cite{zhao2018LevenbergMarquardtMethod}: we decrease $\theta_k$ if the product of $\norm{\Jmk^TF_k}$ and $\theta_k$ is larger than a positive constant, otherwise we keep it unchanged or increase it.

The derivative-free LM algorithm for sparse nonlinear least squares problems is presented as follows.

\begin{algorithm}[H]
	\renewcommand{\algorithmicrequire}{\textbf{Input:}}
	\caption{Derivative-free LM algorithm for sparse nonlinear least squares problems.}
	\label{algo: sparseDFOLM}
	\begin{algorithmic}[1]
		\REQUIRE $0 <\eta_0<1, 0<\eta_1<\eta_2, 0<\gamma_1<1<\gamma_2 $,
		$ x_0\in\R^n, \theta_0 \geq \theta_{\min} > 0, \varepsilon_0>0$.
		\FOR{$ k=0,1,\ldots $}
		\STATE Construct the sparse Jacobian model $ \Jmk $ by \eqref{prob: l1-Jacobian-noisy}--\eqref{jacobianmodel}.
		\STATE If {$\norm{\Jmk^TF_k}\leq\varepsilon_0$}, stop. Otherwise solve \eqref{eq: lm equation} to obtain $ d_k $.
		\STATE Compute $ \rho_k $ by \eqref{eq: ratio}.
		\STATE	Set
		\begin{equation}\label{algo-eq: update xk}
			x_{k+1}=\left\{ \begin{array}{ll}
				x_k+d_k,\quad & \text{if}\; \rho_k>\eta_0, \\
				x_k, & \text{otherwise}.
			\end{array}\right.
		\end{equation}
		\STATE If $ \rho_k < \eta_0 $, set $ \theta_{k+1} = \gamma_2 \theta_{k}$. Otherwise compute
		\begin{equation}\label{algo-eq: update LM para}
			\theta_{k+1}=\left\{ \begin{array}{ll}
				\gamma_2\theta_k, & \text{if}\; \norm{\Jmk^TF_k}<\frac{\eta_1}{\theta_k}, \\[1mm]
				\theta_k, & \text{if}\; \norm{\Jmk^TF_k}\in[\frac{\eta_1}{\theta_k},\frac{\eta_2}{\theta_k}], \\[1mm]
				\max\{\gamma_1\theta_k, \theta_{\min}\},\quad & \text{if}\; \norm{\Jmk^TF_k}>\frac{\eta_2}{\theta_k}.
			\end{array}\right.
		\end{equation}
		\ENDFOR
	\end{algorithmic}
\end{algorithm}

The randomness of the interpolation points implies the randomness of the Jacobian models, the iteration points, the LM parameters and so on.
Consider a random model of the form $M_k$.
We use the notation $ \Jmk = J_{M_k}(V_k) $, $ x_k=X_k(V_k)$ and $ \theta_k= \Theta_k(V_k)$ for their realizations.

\section{First-order accurate Jacobian models}\label{sec: Jacobian models}
In this section, we first review some basic results in the field of compressed sensing, which are very helpful for the later analysis, then prove that the sparse Jacobian models constructed as described in Section \ref{sec: algorithm} are first-order accurate if $ v_k $, $ \sigma_k $ and $ \xi_k $ are taken appropriately.

The focus of noisy compressed sensing is to solve the underdetermined system
\begin{equation}
	y=A\hat{x}+z,
\end{equation}
where $ \hat{x}\in \R^n $ is the sparse signal to reconstruct, $ y \in \R^m $ is the available measurement with $ m\ll n $, $ A\in \R^{m\times n} $ is the known sensing matrix, and $ z $ is the unknown noise.
The recovery of sparse signals relies on the restricted isometry property (RIP) of the sensing matrix (cf. \cite{candes2005DecodingLinearProgramming}).

\begin{definition}\label{def: RIP}
	For each integer $s = 1, 2, \ldots$, define the $ s $-RIP constant $\delta_s$ of a matrix $A$ as the smallest number such that
	\begin{equation}\label{def-eq: RIP}
		(1-\delta_s)\norm{x}^2\leq \norm{Ax}^2 \leq (1+\delta_s)\norm{x}^2
	\end{equation}
	holds for all $s$-sparse vectors. A vector is said to be $s$-sparse if it has at most $s$ nonzero entries.
\end{definition}

The following theorem shows that the desired sparse vector $ \hat{x} $ can be recovered by solving an $ \ell_1 $ minimization problem, provided that the RIP constant of the sensing matrix satisfies some certain condition.

\begin{theorem}[{\cite[Theorem 4.6]{mo2011NewBoundsRestricted}}]
	\label{thm: noisy recovery mo2011}
	Let $ y=A\hat{x}+z $, where $ \hat{x} $ is $ s $-sparse and $\norm{z}\leq\xi$. If the $ 2s $-RIP constant of $ A $ satisfies $\delta_{2s}<0.4931$ , then the solution $ x^* $ to
	\begin{equation}\label{prob: l1-noisy Ax=b}
		\begin{aligned}
			\min_{x\in\R^n} \ & \norm{x}_1 \\
			\rm{s.t.}\ \ & \norm{Ax-y}\leq \xi
		\end{aligned}
	\end{equation}
	satisfies
	\begin{equation}
		\norm{x^*-\hat{x}}\leq c_1\xi,
	\end{equation}
	where $ c_1 $ is a constant only depends on $ \delta_{2s} $.
\end{theorem}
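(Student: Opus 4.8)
The plan is to bound the error vector $h := x^* - \hat{x}$ directly, following the standard RIP-based recovery argument, and then invoke the sharper constant bookkeeping of Mo--Li to reach the threshold $0.4931$. First I would record two consequences of the hypotheses. Since $\norm{A\hat{x}-y} = \norm{z} \le \xi$ and $x^*$ is feasible for \eqref{prob: l1-noisy Ax=b}, the triangle inequality gives the \emph{tube constraint} $\norm{Ah} \le \norm{Ax^*-y} + \norm{y-A\hat{x}} \le 2\xi$. Since $x^*$ is the $\ell_1$-minimizer, $\norm{x^*}_1 \le \norm{\hat{x}}_1$; writing $T_0 := \mathrm{supp}(\hat{x})$ with $\abs{T_0}\le s$ and expanding $\norm{\hat{x}+h}_1$ over $T_0$ and $T_0^c$, one obtains the \emph{cone constraint} $\norm{h_{T_0^c}}_1 \le \norm{h_{T_0}}_1$ (the cross terms involving $\hat{x}$ on $T_0^c$ vanish because $\hat{x}$ is exactly $s$-sparse).

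Next I would set up the usual block decomposition. Order the coordinates of $T_0^c$ by decreasing magnitude of $\abs{h}$ and split $T_0^c = T_1 \cup T_2 \cup \cdots$ into consecutive blocks of size $s$. The monotone ordering gives $\norm{h_{T_{j+1}}}_\infty \le s^{-1}\norm{h_{T_j}}_1$, hence $\norm{h_{T_{j+1}}} \le s^{-1/2}\norm{h_{T_j}}_1$ for $j\ge1$, so summing and using the cone constraint,
\begin{equation*}
	\sum_{j\ge2}\norm{h_{T_j}} \le s^{-1/2}\sum_{j\ge1}\norm{h_{T_j}}_1 = s^{-1/2}\norm{h_{T_0^c}}_1 \le s^{-1/2}\norm{h_{T_0}}_1 \le \norm{h_{T_0}} \le \norm{h_{T_0\cup T_1}}.
\end{equation*}
Since $\norm{h} \le \norm{h_{T_0\cup T_1}} + \sum_{j\ge2}\norm{h_{T_j}} \le 2\norm{h_{T_0\cup T_1}}$, it suffices to bound $\norm{h_{T_0\cup T_1}}$.

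For that I would apply Definition \ref{def: RIP} to the $2s$-sparse vector $h_{T_0\cup T_1}$ together with the near-orthogonality estimate $\abs{\pro{Au}{Aw}} \le \delta_{2s}\norm{u}\norm{w}$, valid for $u,w$ with disjoint supports whose union has size at most $2s$ (this follows from \eqref{def-eq: RIP} applied to $u\pm w$ via the parallelogram identity). Writing $Ah = Ah_{T_0\cup T_1} + \sum_{j\ge2}Ah_{T_j}$ and pairing with $Ah_{T_0\cup T_1}$,
\begin{equation*}
	(1-\delta_{2s})\norm{h_{T_0\cup T_1}}^2 \le \norm{Ah_{T_0\cup T_1}}^2 = \pro{Ah_{T_0\cup T_1}}{Ah} - \sum_{j\ge2}\pro{Ah_{T_0\cup T_1}}{Ah_{T_j}}.
\end{equation*}
Bounding the first term by $\sqrt{1+\delta_{2s}}\,\norm{h_{T_0\cup T_1}}\cdot 2\xi$, splitting each cross term as $\pro{Ah_{T_0}}{Ah_{T_j}} + \pro{Ah_{T_1}}{Ah_{T_j}}$ to apply near-orthogonality, and then using the tail bound above, one arrives at an inequality of the form $(1-c\,\delta_{2s})\norm{h_{T_0\cup T_1}} \le C\xi$ with explicit $c,C$; whenever $c\,\delta_{2s}<1$ this gives $\norm{h_{T_0\cup T_1}} \le c_1'\xi$ and hence $\norm{h} \le c_1\xi$ with $c_1$ depending only on $\delta_{2s}$.

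The main obstacle is the precise constant. The crude bookkeeping just sketched only tolerates $\delta_{2s} < \sqrt{2}-1 \approx 0.4142$; raising the admissible range to $0.4931$ is exactly the contribution of Mo--Li and needs a more delicate treatment — in particular, not discarding $h_{T_1}$ wholesale but exploiting that its entries are $\ell_\infty$-dominated by $s^{-1}\norm{h_{T_0}}_1$, combined with an optimized convex splitting (a ``shifting''-type inequality) of the mass of $h_{T_0}$ against the tail blocks, followed by minimizing the resulting quadratic in $\delta_{2s}$. I would follow their argument for that step rather than re-deriving it; the remainder is the routine Cand\`es-type chain above.
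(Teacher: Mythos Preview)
The paper does not prove this theorem at all: it is quoted verbatim as \cite[Theorem 4.6]{mo2011NewBoundsRestricted} and used as a black box, so there is no ``paper's own proof'' to compare against. Your sketch is a faithful outline of the standard Cand\`es--Tao tube-plus-cone argument, and you correctly identify that the naive bookkeeping only reaches $\delta_{2s}<\sqrt{2}-1$ while the stated threshold $0.4931$ genuinely requires the Mo--Li shifting refinement that you defer to their paper; for the purposes of this manuscript that is exactly the right level of detail, since the result is imported rather than reproved.
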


For simplicity, we use the RIP matrix as a shorthand for the matrix that satisfies the RIP of order $ 2s $ with constant $ \delta_{2s}<0.4931 $, where $ s $ is the sparsity level.

Define the event
\begin{equation}\label{Tk}
	T_k=\{ A(v_k) \in \R^{p\times n} \ \mbox{is an RIP matrix and}\ \|v_k^j\| \leq \kbv \ \mbox{for all} \ j=1,\ldots,p \,\},
\end{equation}
where $A(v_k)$ is defined by \eqref{eq: interp condition matrix form} and $\kbv$ is a positive constant.

\begin{definition}\label{def: RIP and bounded}
	Given $ \beta\in(0,1) $ and $ \kbv>0 $. $ \{A(V_k)\} $ is said a sequence of $ \beta $-probabilistically $ \kbv $-bounded RIP matrices if the events $ T_k $ satisfy the submartingale-like condition
	\begin{equation}\label{def-eq: Tk submartingale}
		\P\left(T_k \mid \mathcal{F}_{k-1}\right) \geq \beta,
	\end{equation}
	where $ \mathcal{F}_{k-1} $ is the $ \sigma $-algebra generated by $ V_0,\ldots,V_{k-1} $.
\end{definition}

To estimate the difference between the exact Jacobian matrix and its model, we introduce the definition of the first-order accurate Jacobian model as in \cite{zhao2022LevenbergMarquardtMethod}. Hereafter, $ J_k $ is used to denote $ J(x_k) $.

\begin{definition}\label{def: Jacobian accurate}
	Given $ \kej>0 $. 	A Jacobian model realization $ \Jmk $ is said to be $\kej $-first-order accurate if
	\begin{equation}\label{def-eq: Jacobian accurate}
		\norm{\Jmk-J_k}\leq\frac{\kej}{\theta_k}.
	\end{equation}
\end{definition}

Now we prove that $ \Jmk $ constructed by \eqref{prob: l1-Jacobian-noisy}--\eqref{jacobianmodel} is $\kej $-first-order accurate if $ v_k, \sigma_k $ and $ \xi_k $ are chosen appropriately.
We make some assumptions.

	\begin{assumption}\label{assp: J Lipschitz}
		(i) $ F(x): \R^n \rightarrow \R^m $ is continuously differentiable and the gradients $ \nabla F^{(1)}(x), \ldots, \nabla F^{(m)}(x)$ are Lipschitz continuous, i.e., there exists $ \klg >0$ such that for all $ i=1,\ldots,m, $
		\begin{equation}\label{assp-eq: gradient Lipschitz}
			\norm{\nabla F^{(i)}(x)-\nabla F^{(i)}(y)}\leq \klg\norm{x-y},\quad \forall x,y \in \R^n.
		\end{equation}

		(ii) $ \nabla F^{(1)}(x), \ldots, \nabla F^{(m)}(x)$ are $ s $-sparse for all $x\in \R^n $.
	\end{assumption}

	\eqref{assp-eq: gradient Lipschitz} implies that there exists $ \klj > 0 $ such that
	\begin{equation}\label{assp-eq: jacobian Lipschitz}
		\norm{J(x)-J(y)}\leq \klj\norm{x-y},\quad \forall x,y \in \R^n.
	\end{equation}

	\begin{theorem}\label{thm: Jacobian accurate}
		Suppose that Assumption \ref{assp: J Lipschitz} holds.
		Let $\sigma_k=\norm{d_{k-1}}$ and $\xi_k =\sqrt{p}\klg \kbv^2\sigma_k/2 $.
		If $ T_k $ happens, then $ \Jmk $ constructed by \eqref{prob: l1-Jacobian-noisy}--\eqref{jacobianmodel} is $ \kej $-first-order accurate,
		where $\kej = \sqrt{mp} c_1\gamma_2 \klg \kbv^2/2$
		with $c_1$ given in Theorem \ref{thm: noisy recovery mo2011}.
	\end{theorem}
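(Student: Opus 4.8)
The plan is to bound $\norm{\Jmk - J_k}$ by bounding each row difference $\norm{g_k^{(i)} - \nabla F^{(i)}(x_k)}$ and then using $\norm{\Jmk - J_k} \leq \norm{\Jmk - J_k}_F = \big(\sum_{i=1}^m \norm{g_k^{(i)} - \nabla F^{(i)}(x_k)}^2\big)^{1/2}$. For each fixed $i$, the idea is to apply Theorem \ref{thm: noisy recovery mo2011} with sensing matrix $A(v_k)$, true signal $\hat{x} = \nabla F^{(i)}(x_k)$ (which is $s$-sparse by Assumption \ref{assp: J Lipschitz}(ii)), measurement $y = \sigma_k^{-1} y_k^{(i)}$, and noise $z = \sigma_k^{-1} y_k^{(i)} - A(v_k)\nabla F^{(i)}(x_k)$. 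On the event $T_k$, $A(v_k)$ is an RIP matrix, so the hypothesis $\delta_{2s} < 0.4931$ holds and the theorem applies once we verify $\norm{z} \leq \xi_k$; it then yields $\norm{g_k^{(i)} - \nabla F^{(i)}(x_k)} \leq c_1 \xi_k$.

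**Next I would** estimate the noise level $\norm{z}$. Each entry of $z$ has the form $\sigma_k^{-1}\big(F^{(i)}(x_k + \sigma_k v_k^j) - F^{(i)}(x_k)\big) - (v_k^j)^T \nabla F^{(i)}(x_k)$. By the integral form of Taylor's theorem (or the mean value theorem applied componentwise) together with the Lipschitz continuity of $\nabla F^{(i)}$ from \eqref{assp-eq: gradient Lipschitz}, the standard bound gives
\begin{equation*}
	\abs{F^{(i)}(x_k + \sigma_k v_k^j) - F^{(i)}(x_k) - \sigma_k (v_k^j)^T \nabla F^{(i)}(x_k)} \leq \frac{\klg}{2}\norm{\sigma_k v_k^j}^2 = \frac{\klg}{2}\sigma_k^2 \norm{v_k^j}^2.
\end{equation*}
Dividing by $\sigma_k$, and using $\norm{v_k^j} \leq \kbv$ on $T_k$, the $j$-th entry of $z$ is bounded in absolute value by $\tfrac{1}{2}\klg \kbv^2 \sigma_k$. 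Since $z$ has $p$ entries, $\norm{z} \leq \sqrt{p}\,\tfrac{1}{2}\klg \kbv^2 \sigma_k = \xi_k$, which is exactly the prescribed choice of $\xi_k$. So Theorem \ref{thm: noisy recovery mo2011} applies with this $\xi_k$.

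**Then I would** assemble the pieces: $\norm{g_k^{(i)} - \nabla F^{(i)}(x_k)} \leq c_1 \xi_k$ for every $i$, hence
\begin{equation*}
	\norm{\Jmk - J_k} \leq \Big(\sum_{i=1}^m \norm{g_k^{(i)} - \nabla F^{(i)}(x_k)}^2\Big)^{1/2} \leq \sqrt{m}\, c_1 \xi_k = \sqrt{m}\,c_1 \cdot \frac{\sqrt{p}\,\klg \kbv^2 \sigma_k}{2} = \frac{\sqrt{mp}\,c_1 \klg \kbv^2}{2}\,\sigma_k.
\end{equation*}
Finally, since $\sigma_k = \norm{d_{k-1}}$ and the step $d_{k-1}$ solves \eqref{eq: lm equation} with $\lambda_{k-1} = \theta_{k-1}\norm{J_{m_{k-1}}^T F_{k-1}}$, one has the standard LM step bound $\norm{d_{k-1}} \leq \norm{J_{m_{k-1}}^T F_{k-1}}/\lambda_{k-1} = 1/\theta_{k-1}$, and then $\theta_{k-1} \geq \theta_k/\gamma_2$ from the update rule \eqref{algo-eq: update LM para} (the parameter decreases by at most a factor $\gamma_2$... actually it can only shrink when divided, so $\theta_k \leq \gamma_2 \theta_{k-1}$, giving $\sigma_k \leq 1/\theta_{k-1} \leq \gamma_2/\theta_k$). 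Substituting yields $\norm{\Jmk - J_k} \leq \tfrac{\sqrt{mp}\,c_1\gamma_2 \klg \kbv^2}{2}\cdot \tfrac{1}{\theta_k} = \kej/\theta_k$, as claimed.

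**The main obstacle** I anticipate is the bookkeeping around $\sigma_k = \norm{d_{k-1}}$: one must carefully justify $\norm{d_{k-1}} \leq 1/\theta_{k-1}$ from the LM equation and then relate $\theta_{k-1}$ to $\theta_k$ via the update rule to absorb the $\gamma_2$ factor into the definition of $\kej$; this is where the precise form of the constant comes from. The RIP/$\ell_1$ recovery step itself is a direct citation of Theorem \ref{thm: noisy recovery mo2011}, and the Taylor remainder estimate is routine, so the real care is in chaining the algorithmic quantities correctly.
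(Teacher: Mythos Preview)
Your proposal is correct and follows essentially the same approach as the paper's proof: the Taylor/Lipschitz remainder bound on each interpolation residual, the application of Theorem~\ref{thm: noisy recovery mo2011} row by row, the passage from per-row error to the Frobenius norm, and the chain $\sigma_k=\norm{d_{k-1}}\le 1/\theta_{k-1}\le \gamma_2/\theta_k$ are exactly the steps the paper uses. Your explicit invocation of Assumption~\ref{assp: J Lipschitz}(ii) to ensure $\nabla F^{(i)}(x_k)$ is $s$-sparse is a point the paper leaves implicit but is indeed needed for Theorem~\ref{thm: noisy recovery mo2011} to apply.
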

	\begin{proof}
		Since $\nabla F^{(i)}(x)$ is $\klg$-Lipschitz continuous and $ T_k $ happens, for $j=1,\ldots,p,$ we have
		\begin{align}
			& \hspace{1.3em} \abs{F^{(i)}(x_k+\sigma_k v^j_k)-F^{(i)}(x_k)-\nabla F^{(i)}(x_k)^T(\sigma_k v^j_k)} \nonumber\\
			& = \abs{\int_{0}^{1}\big[\nabla F^{(i)}(x_k+t\sigma_k v^j_k)-\nabla F^{(i)}(x_k)\big]^T (\sigma_k v^j_k) dt} \nonumber \\
			& \leq \int_{0}^{1} \klg\big\|\sigma_k v^j_k\big\|^2\;t\;dt \nonumber\\
			& \leq \frac12\klg \kbv^2\sigma_k^2,
		\end{align}
		which yields
		\begin{equation}
			\norm{ A(v_k)\nabla F^{(i)}(x_k)-\sigma_k^{-1}y_k^{(i)}} \leq \frac{1}{2}\sqrt{p}\klg \kbv^2\sigma_k.
		\end{equation}
		Then, by Theorem \ref{thm: noisy recovery mo2011},
		\begin{equation}\label{eq: alpha-nabla}
			\norm{g_k^{(i)}-\nabla F^{(i)}(x_k)} \leq \frac{1}{2}\sqrt{p}c_1\klg \kbv^2\sigma_k.
		\end{equation}
		Hence
		\begin{equation}
			\normF{J_{m_k}-J_k} \leq \frac12 \sqrt{mp}c_1 \klg \kbv^2 \sigma_k.
		\end{equation}

		By the definition of $d_k$ and \eqref{algo-eq: update LM para},
		\begin{equation}
			\sigma_k = \norm{d_{k-1}} \leq \frac{\big\|J_{m_{k-1}}^TF_{k-1}\big\|}{\lambda_{k-1} } = \frac{1}{\theta_{k-1}} \leq \frac{\gamma_2}{\theta_k}.
		\end{equation}
		Thus
		\begin{equation}\label{eq: Jmk-Jk-accurate}
			\norm{J_{m_k}-J_k}\leq\normF{J_{m_k}-J_k}\leq \frac{\sqrt{mp}c_1\gamma_2 \klg \kbv^2}{2\theta_k}.
		\end{equation}
		The proof is completed.
	\end{proof}

%

There are many types of random matrices such that $T_k$ happens.
For example, suppose that $A(v_k)\in\R^{p\times n}$ is generated from one of the following distributions:
\begin{enumerate}[(a)]
	\item Gaussian distribution:
	      \begin{equation}\label{distribution: Gaussian}
		      a_{i,j}\sim \mathcal{N}\big(0,\frac{1}{p}\big);
	      \end{equation}
	\item Bernoulli distribution:
	      \begin{equation}\label{distribution: Bernoulli}
		      \P\Big(a_{i,j}=\frac{1}{\sqrt{p}}\Big) = \P\Big(a_{i,j}=-\frac{1}{\sqrt{p}}\Big) = \frac{1}{2};
	      \end{equation}
	\item A Bernoulli-like distribution:
	      \begin{equation}\label{distribution: Bernoulli-like}
		      \P\Big(a_{i,j}=\sqrt{\frac{3}{p}}\Big) = \P\Big(a_{i,j}=-\sqrt{\frac{3}{p}}\Big) = \frac{1}{6},\quad \P\big(a_{i,j}=0\big) = \frac{2}{3}.
	      \end{equation}
\end{enumerate}
Then by {\cite[Theorem 5.2]{baraniuk2008SimpleProofRestricted}}, given $ \hat{\delta}\in (0,1)$, there exist $ a_1, a_2 >0$ depending only on $ \hat{\delta}$ such that if
\begin{equation}\label{eq: number of interpolation points}
	p\geq \frac{ 2s\log(n/2s)}{a_1},
\end{equation}
then the probability that $ A(v_k) $ satisfies the RIP of order $ 2s $ with $ \delta_{2s}=\hat{\delta} $ is at least
\begin{equation}
	\beta:= 1-2e^{-a_2 p}.
\end{equation}

If $ A(v_k) $ has independent entries following \eqref{distribution: Bernoulli} (or \eqref{distribution: Bernoulli-like}), then $ \|v_k^j\| $ is bounded by $ \kbv = \sqrt{n/p}$ (or $ \kbv = \sqrt{3n/p}$).
If $ A(v_k) $ has independent entries following \eqref{distribution: Gaussian}, then $ \|v_k^j\|$ follows a chi-square distribution and has an upper bound with high probability (cf. \cite{inglot2010inequalities}).

\section{Almost-sure global convergence}\label{sec: convergence}
In this section, we study convergence properties of Algorithm \ref{algo: sparseDFOLM}.
It is shown that Algorithm \ref{algo: sparseDFOLM} converges to a stationary point of the sparse nonlinear least squares problem with probability one.


Define the level set
$$
	\mathcal{L}_0:=\{x\in\R^n \mid \norm{F(x)}\leq \norm{F(x_0)}\}.
$$
By \eqref{algo-eq: update xk}, the sequence $\{x_k\}$ generated by Algorithm \ref{algo: sparseDFOLM} lies in
$\mathcal{L}_0$. Denote
\begin{equation*}
	\mathcal{L}:= \mathcal{L}_0 \;\bigcup\; \textstyle \bigcup\limits_{x\in\mathcal{L}_0} \! B(x,\theta_{\min}^{-1}).
\end{equation*}
We make the following assumptions.

\begin{assumption}\label{assp: L compact}
	(i) $ \mathcal{L} $ is compact.

	(ii) $ F(x) $ is Lipschitz continuous in $ \mathcal{L} $, i.e., there exist $ \klf>0 $ such that
	\begin{equation}\label{assp-eq: F Lipschitz}
		\norm{F(y)-F(x)}\leq \klf\norm{y-x},\quad \forall x,y\in \mathcal{L}.
	\end{equation}
\end{assumption}

By Assumption \ref{assp: L compact}, there exist $ \kbf$ such that
\begin{equation}\label{assp-eq: F bounded}
	\norm{F(x)}\leq \kbf,\quad \forall x\in \mathcal{L},
\end{equation}
and
\begin{equation}
	\norm{J(x)}\leq \klf,\quad \forall x\in \mathcal{L}.
	\label{assp-eq: J bounded}
\end{equation}

\begin{assumption}\label{assp: Jmk bounded}
	For every realization of Algorithm \ref{algo: sparseDFOLM}, $ \Jmk $ is bounded for all $ k $, i.e., there exists $ \kbmj > 0 $ such that
	\begin{equation}\label{assp-eq: Jmk bounded}
		\norm{\Jmk}\leq \kbmj,\quad \forall k.
	\end{equation}
\end{assumption}

It can be verified that $d_k$ is also a solution to the trust region subproblem
\begin{equation}\label{prob: trust-region}
	\min_{d\in\R^n}\ \frac{1}{2}\norm{F_k+\Jmk d}^2
	\quad \text{s.t.}\ \norm{d}\leq\norm{d_k}.
\end{equation}
By Powell's result \cite[Theorem 4]{powell1975ConvergencePropertiesClass}, we have
\begin{lemma}\label{lm: cauchy decrease}
	The predicted reduction satisfies
	\begin{equation}
		\norm{F_k}^2 - \norm{F_k+\Jmk d_k}^2 \geq \norm{\Jmk^T F_k}\min\left\{ \norm{d_k}, \frac{\norm{\Jmk^T F_k}}{\norm{\Jmk^T \Jmk}}\right\}
	\end{equation}
	for all $ k $.
\end{lemma}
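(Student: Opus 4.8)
The plan is to reduce the claimed inequality to a standard Cauchy-point / trust-region lemma. First I would observe that, by the definition of $d_k$ in \eqref{eq: lm equation}, the vector $d_k$ is exactly the minimizer of the (strictly convex) quadratic $q_k(d):=\tfrac12\norm{F_k+\Jmk d}^2+\tfrac{\lambda_k}{2}\norm{d}^2$, since setting $\nabla q_k(d)=0$ recovers the LM normal equations $(\Jmk^T\Jmk+\lambda_kI)d=-\Jmk^TF_k$. A standard argument then shows $d_k$ also solves the trust-region subproblem \eqref{prob: trust-region} with radius $\Delta_k:=\norm{d_k}$: indeed the LM equation is precisely the optimality system of \eqref{prob: trust-region} with Lagrange multiplier $\lambda_k\ge 0$ for the active constraint $\norm{d}\le\norm{d_k}$, so by the classical characterization of global trust-region minimizers (the matrix $\Jmk^T\Jmk+\lambda_kI$ being positive definite), $d_k$ is the global solution of \eqref{prob: trust-region}. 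This is the justification of the sentence immediately preceding the lemma; I would state it in one or two lines.

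Next I would invoke Powell's classical estimate for the decrease attainable at the Cauchy point of a trust-region model. For the model $\psi_k(d):=\tfrac12\norm{F_k+\Jmk d}^2$ with $\nabla\psi_k(0)=\Jmk^TF_k$ and Hessian $\Jmk^T\Jmk$, the Cauchy step (the minimizer of $\psi_k$ along $-\Jmk^TF_k$ inside the ball of radius $\Delta_k$) gives
\begin{equation*}
	\psi_k(0)-\psi_k(d_k)\;\ge\;\tfrac12\,\norm{\Jmk^TF_k}\min\left\{\norm{d_k},\,\frac{\norm{\Jmk^TF_k}}{\norm{\Jmk^T\Jmk}}\right\},
\end{equation*}
since $d_k$ is the global minimizer and hence does at least as well as the Cauchy point. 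Multiplying by $2$ converts $\psi_k$ into $\tfrac12\norm{\cdot}^2\mapsto\norm{\cdot}^2$ and yields exactly the stated inequality $\norm{F_k}^2-\norm{F_k+\Jmk d_k}^2\ge\norm{\Jmk^TF_k}\min\{\norm{d_k},\norm{\Jmk^TF_k}/\norm{\Jmk^T\Jmk}\}$. The cited reference \cite[Theorem 4]{powell1975ConvergencePropertiesClass} supplies precisely this bound, so it suffices to check that our $\psi_k$ and $\Delta_k$ meet its hypotheses (convex quadratic model, trust-region radius equal to $\norm{d_k}$), which is immediate.

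I do not expect a genuine obstacle here; the only point requiring a little care is the equivalence between the LM step and the trust-region subproblem solution, i.e., verifying that the LM multiplier $\lambda_k$ is consistent with the trust-region radius $\norm{d_k}$ and that the second-order sufficiency (positive definiteness of $\Jmk^T\Jmk+\lambda_kI$, which holds since $\lambda_k=\theta_k\norm{\Jmk^TF_k}>0$ whenever the algorithm has not stopped) makes $d_k$ the \emph{global} minimizer of \eqref{prob: trust-region}. Once that equivalence is in hand, the lemma is an immediate quotation of Powell's result with a factor-of-two rescaling. If one prefers a self-contained argument, the Cauchy-point computation is a one-line exact minimization of a scalar quadratic and could be written out in place of citing \cite{powell1975ConvergencePropertiesClass}, but citing it keeps the proof short.
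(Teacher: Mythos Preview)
Your proposal is correct and takes essentially the same approach as the paper: the paper simply notes that $d_k$ solves the trust-region subproblem \eqref{prob: trust-region} and then cites Powell's result \cite[Theorem 4]{powell1975ConvergencePropertiesClass} without further argument. You have filled in the justification for the LM/trust-region equivalence and the factor-of-two rescaling in more detail than the paper does, but the route is identical.
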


We first prove that the $k$-th iteration is successful if $ \Jmk $ is first-order accurate and
$\theta_k\norm{\Jmk^T F_k}$ is larger than some positive constant.

\begin{lemma}\label{lm: M1 lambda large then success}
	Suppose that Assumptions \ref{assp: J Lipschitz}, \ref{assp: L compact} and \ref{assp: Jmk bounded} hold. Consider the $ k $-th iteration of a realization of Algorithm \ref{algo: sparseDFOLM}. If \eqref{def-eq: Jacobian accurate} is satisfied and
	\begin{equation}\label{thetak}
		\theta_k \geq \frac{\klj\kbf+\klf^2+2\kbf\kej+\kbmj^2}{(1-\eta_0)\norm{\Jmk^T F_k}}:=\frac{M}{\norm{\Jmk^T F_k}},
	\end{equation}
	then $ \rho_k\geq \eta_0 $.
\end{lemma}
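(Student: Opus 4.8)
The goal is to show $\rho_k = \mathrm{Ared}_k/\mathrm{Pred}_k \ge \eta_0$, which is equivalent to $\mathrm{Ared}_k - \eta_0\,\mathrm{Pred}_k \ge 0$, i.e.
$\mathrm{Ared}_k - \mathrm{Pred}_k \ge -(1-\eta_0)\,\mathrm{Pred}_k$.
So the plan is to (a) bound $\mathrm{Pred}_k$ from below using Lemma \ref{lm: cauchy decrease}, and (b) bound $|\mathrm{Ared}_k - \mathrm{Pred}_k|$ from above, then combine so that the hypothesis \eqref{thetak} on $\theta_k$ forces the inequality.

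\emph{Step 1 (lower bound on $\mathrm{Pred}_k$).}
By Lemma \ref{lm: cauchy decrease},
$\mathrm{Pred}_k \ge \norm{\Jmk^T F_k}\min\{\norm{d_k},\,\norm{\Jmk^T F_k}/\norm{\Jmk^T\Jmk}\}$.
From the LM equation \eqref{eq: lm equation}, $\norm{d_k} \le \norm{\Jmk^T F_k}/\lambda_k = 1/\theta_k$, and also $\norm{d_k} = \norm{(\Jmk^T\Jmk+\lambda_k I)^{-1}\Jmk^T F_k}$ can be bounded below; more directly, since $\norm{\Jmk^T\Jmk} \le \kbmj^2$ by Assumption \ref{assp: Jmk bounded}, the min is at least $\min\{\norm{d_k}, \norm{\Jmk^T F_k}/\kbmj^2\}$. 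A cleaner route: bound $\mathrm{Pred}_k \ge \tfrac12\norm{\Jmk^T F_k}\norm{d_k}$ or $\mathrm{Pred}_k \ge \norm{\Jmk^T F_k}^2/(\kbmj^2+\lambda_k)$-type estimate, whichever the paper's constant $M$ is calibrated to; in any case $\mathrm{Pred}_k \gtrsim \norm{\Jmk^T F_k}^2/(\text{something involving }\theta_k, \kbmj)$.

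\emph{Step 2 (bound on the discrepancy).}
Write $\mathrm{Ared}_k - \mathrm{Pred}_k = \norm{F_k + \Jmk d_k}^2 - \norm{F(x_k+d_k)}^2$. Insert $\pm F(x_k+d_k)$ and use that $F(x_k+d_k) = F_k + \int_0^1 J(x_k+td_k)d_k\,dt$. The difference decomposes into terms controlled by: the Jacobian Lipschitz constant $\klj$ (from replacing the integrated Jacobian by $J_k$, giving $\tfrac12\klj\norm{d_k}^2$), the model accuracy $\kej/\theta_k$ from \eqref{def-eq: Jacobian accurate} (from replacing $J_k$ by $\Jmk$), and the bounds $\norm{F_k}\le\kbf$, $\norm{J_k}\le\klf$, $\norm{\Jmk}\le\kbmj$ for the cross terms. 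Using $\norm{d_k}\le 1/\theta_k$ throughout, one gets
$|\mathrm{Ared}_k - \mathrm{Pred}_k| \le \big(\klj\kbf + \klf^2 + 2\kbf\kej + \kbmj^2\big)\norm{d_k}^2 = M\norm{d_k}^2$ (up to the exact grouping of constants that defines $M$ in \eqref{thetak}).

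\emph{Step 3 (combine).}
We then need $(1-\eta_0)\mathrm{Pred}_k \ge |\mathrm{Ared}_k - \mathrm{Pred}_k|$. Using Step 1 in the form $\mathrm{Pred}_k \ge \norm{\Jmk^T F_k}\norm{d_k}$ (valid when $\norm{d_k} \le \norm{\Jmk^T F_k}/\norm{\Jmk^T\Jmk}$, which holds since $\norm{d_k}\le 1/\theta_k$ and the hypothesis makes $\theta_k$ large) and Step 2, it suffices that $(1-\eta_0)\norm{\Jmk^T F_k}\norm{d_k} \ge M\norm{d_k}^2$, i.e. $\norm{d_k} \le (1-\eta_0)\norm{\Jmk^T F_k}/M$. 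Since $\norm{d_k}\le 1/\theta_k$, this follows from $\theta_k \ge M/\big((1-\eta_0)\norm{\Jmk^T F_k}\big)$, which is exactly \eqref{thetak}. Hence $\rho_k \ge \eta_0$.

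\emph{Main obstacle.}
The delicate part is making the two one-sided estimates line up with the \emph{precise} constant $M$ in \eqref{thetak}: I must ensure that in Step 1 the relevant branch of the $\min$ is the $\norm{d_k}$-branch (this needs the largeness of $\theta_k$, so there is a mild circularity to resolve by checking $1/\theta_k \le \norm{\Jmk^T F_k}/\kbmj^2$ under \eqref{thetak}), and that in Step 2 the four constants $\klj\kbf$, $\klf^2$, $2\kbf\kej$, $\kbmj^2$ genuinely dominate the four groups of error terms when everything is expressed in $\norm{d_k}^2$ — in particular handling the cross term $\pro{F_k+\Jmk d_k}{\Jmk d_k - (\text{integrated }J)d_k}$ carefully, since a sloppy triangle inequality there could produce a larger constant than $M$.
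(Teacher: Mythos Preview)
Your plan matches the paper's proof almost exactly: establish $\mathrm{Pred}_k\ge\norm{\Jmk^TF_k}\norm{d_k}$ via Lemma~\ref{lm: cauchy decrease} (after checking, using $\kbmj^2\le M$ and \eqref{thetak}, that $\norm{d_k}\le 1/\theta_k\le\norm{\Jmk^TF_k}/\kbmj^2\le\norm{\Jmk^TF_k}/\norm{\Jmk^T\Jmk}$), bound the discrepancy $\bigl|\norm{F_k+\Jmk d_k}^2-\norm{F(x_k+d_k)}^2\bigr|$, and conclude $|\rho_k-1|\le 1-\eta_0$.

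There is one concrete slip in Step~2. The Jacobian-accuracy term contributes
\[
\bigl|2\bigl((\Jmk-J_k)^TF_k\bigr)^Td_k\bigr|\le 2\kbf\,\frac{\kej}{\theta_k}\,\norm{d_k},
\]
and since $\norm{d_k}\le 1/\theta_k$ (not $\ge$), you \emph{cannot} replace $\theta_k^{-1}$ by $\norm{d_k}$ to get $2\kbf\kej\norm{d_k}^2$; the inequality goes the wrong way. Hence the discrepancy is not bounded by $M\norm{d_k}^2$ with the stated $M$. The fix the paper uses (and which your Step~3 actually needs) is to bound the discrepancy by
\[
(\klj\kbf+\klf^2+\kbmj^2)\norm{d_k}^2+2\kbf\kej\theta_k^{-1}\norm{d_k}
\]
and then divide by $\norm{\Jmk^TF_k}\norm{d_k}$ \emph{before} invoking $\norm{d_k}\le 1/\theta_k$ on the remaining factor of $\norm{d_k}$. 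Equivalently, the clean upper bound is $M\theta_k^{-1}\norm{d_k}$, not $M\norm{d_k}^2$; with that correction your Step~3 goes through verbatim and yields exactly \eqref{thetak}.
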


\begin{proof}
	By \eqref{assp-eq: jacobian Lipschitz} and \eqref{assp-eq: F bounded}--\eqref{assp-eq: J bounded}, for any $ x,y\in\mathcal{L} $,
	\begin{align}
		& \norm{J(x)^TF(x)-J(y)^TF(y)} \nonumber \\
		\leq {} & \norm{J(x)^T(F(x)-F(y))} +\norm{(J(x)-J(y))^TF(y)}\nonumber \\
		\leq {} & (\klf^2+\klj\kbf)\norm{x-y}.\label{eq: JTF Lipschitz}
	\end{align}
	Hence the gradient of $ \frac12\norm{F(x)}^2 $ is Lipschitz continuous.
	So it holds
	\begin{equation}\label{eq: f2 quadratic bound}
		\begin{aligned}
			\big|\norm{F(y)}^2\!-\!\norm{F(x)}^2\!-\!(2J(x)^TF(x))^T(y-x)\big|
			\leq{} & (\klf^2+\klj\kbf)\norm{x\!-\!y}^2.
		\end{aligned}
	\end{equation}
	Combining \eqref{def-eq: Jacobian accurate}, \eqref{assp-eq: F bounded}, \eqref{assp-eq: Jmk bounded} and \eqref{eq: f2 quadratic bound}, we obtain
	\begin{align}	\label{eq: numerator decrease}
		& \left|\norm{F_k+\Jmk d_k}^2 \!-\! \norm{F(x_k+d_k)}^2\right|\nonumber \\
		\leq{} & \left|\norm{F_k}^2 + 2(J_k^T F_k)^Td_k- \norm{F(x_k+d_k)}^2\right|+ \left|2((\Jmk-J_k)^T F_k)^Td_k\right| + \norm{\Jmk d_k}^2\nonumber \\
		\leq {} & (\klj\kbf+\klf^2) \norm{d_k}^2 + 2\kbf\kej\theta_k^{-1}\norm{d_k}+\kbmj^2\norm{d_k}^2.
	\end{align}

	By \eqref{assp-eq: Jmk bounded}, \eqref{thetak} and the definition of $d_k$,
	\begin{align}
		\norm{d_k}\leq \frac{1}{\theta_k}
		\leq \frac{(1-\eta_0)\norm{\Jmk^T F_k}}{ \kbmj^2}
		\label{eq: dk upper bound 2}
		\leq 	\frac{\norm{\Jmk^T F_k}}{\norm{\Jmk^T \Jmk}}.
	\end{align}
	It then follows from Lemma \ref{lm: cauchy decrease} that
	\begin{equation}\label{eq: pred decrease}
		\mathrm{Pred}_k = \norm{F_k}^2 - \norm{F_k+\Jmk d_k}^2 \geq \norm{\Jmk^T F_k} \norm{d_k}.
	\end{equation}
	Hence, by \eqref{thetak}, \eqref{eq: numerator decrease}--\eqref{eq: pred decrease},
	\begin{align}
		|\rho_k-1| & =\left| \frac{\norm{F_k+\Jmk d_k}^2-\norm{F(x_k+d_k)}^2}{\norm{F_k}^2-\norm{F_k+\Jmk d_k}^2}\right| \nonumber\\
		& \leq \frac{(\klj\kbf+\klf^2) \norm{d_k}^2 + 2\kbf\kej\theta_k^{-1}\norm{d_k}+\kbmj^2\norm{d_k}^2}{\norm{\Jmk^T F_k} \norm{d_k}} \nonumber\\
		& \leq \frac{\klj\kbf+\klf^2+2\kbf\kej+\kbmj^2}{\theta_k \norm{\Jmk^T F_k}} \nonumber\\
		& \leq 1-\eta_0.
	\end{align}
	This implies that $ \rho_k \geq \eta_0 $. The proof is completed.
\end{proof}

Next we prove that the sequence $ \{\theta_k\} $ goes to infinity.

\begin{lemma}\label{lm: theta goes to infinity}
	Suppose that Assumptions \ref{assp: L compact} and \ref{assp: Jmk bounded} hold.
	For any realization of Algorithm \ref{algo: sparseDFOLM},
	\begin{equation}\label{lm-eq: theta goes to infinity}
		\lim_{k\rightarrow\infty} \theta_k = +\infty.
	\end{equation}
\end{lemma}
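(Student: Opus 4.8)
The plan is to argue by contradiction, supposing that $\{\theta_k\}$ stays bounded along some realization. Since $\theta_k \geq \theta_{\min} > 0$ always, boundedness would mean $\theta_k \in [\theta_{\min}, \Theta]$ for some finite $\Theta$. The update rule \eqref{algo-eq: update LM para} only multiplies by $\gamma_2 > 1$ at successful iterations where $\norm{\Jmk^T F_k} < \eta_1/\theta_k$, and multiplies by $\gamma_2$ at every unsuccessful iteration. So if $\theta_k$ stays bounded, there can be only finitely many unsuccessful iterations overall (each unsuccessful step would push $\theta$ up by a factor $\gamma_2$, and these are never undone by more than a factor $\gamma_1 < 1$ per successful step — I'd need to track the multiplicative balance carefully, but the upshot is that infinitely many unsuccessful iterations force $\theta_k \to \infty$).

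**Next I would** focus on the tail where all iterations are successful. At a successful iteration, $x_{k+1} = x_k + d_k$ and $\mathrm{Ared}_k > \eta_0 \,\mathrm{Pred}_k > 0$, so $\norm{F_k}^2$ is strictly decreasing; since it is bounded below by $0$, the successive reductions $\mathrm{Ared}_k$ are summable, hence $\mathrm{Pred}_k \to 0$. By Lemma \ref{lm: cauchy decrease}, $\mathrm{Pred}_k \geq \norm{\Jmk^T F_k}\min\{\norm{d_k}, \norm{\Jmk^T F_k}/\norm{\Jmk^T\Jmk}\}$. Using $\norm{d_k} \leq \norm{\Jmk^T F_k}/\lambda_k = 1/\theta_k \leq 1/\theta_{\min}$ and $\norm{\Jmk^T\Jmk} \leq \kbmj^2$ (Assumption \ref{assp: Jmk bounded}), the minimum is bounded below by a positive multiple of $\min\{\norm{\Jmk^T F_k}, \text{const}\}$ times $\norm{\Jmk^T F_k}$, so $\mathrm{Pred}_k \to 0$ forces $\norm{\Jmk^T F_k} \to 0$. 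But then for all large $k$ in this tail, $\norm{\Jmk^T F_k} < \eta_1/\theta_k$ (since the left side $\to 0$ while $\eta_1/\theta_k \geq \eta_1/\Theta > 0$), so the first branch of \eqref{algo-eq: update LM para} applies and $\theta_{k+1} = \gamma_2 \theta_k$. Iterating, $\theta_k \to \infty$, contradicting boundedness.

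**The main obstacle** will be the bookkeeping in the first part: ruling out the case of infinitely many unsuccessful iterations while $\{\theta_k\}$ stays bounded, and handling the interplay between the $\gamma_2$ increases (from unsuccessful steps and from the first branch) and the $\gamma_1$ decreases (from the third branch). One clean way: if $\theta_k \leq \Theta$ for all $k$, let $S$ and $U$ denote the sets of successful and unsuccessful iteration indices up to $K$; each index in $U$ contributes a factor $\gamma_2$ to $\theta_K/\theta_0$, each successful index contributes at most $\gamma_2$ and at least $\gamma_1$, so $\theta_K \geq \theta_0 \gamma_1^{|S \cap [0,K)|}\gamma_2^{|U \cap [0,K)|}$; combined with the earlier argument that eventually all successful iterations also use the $\gamma_2$ branch, boundedness of $\theta_K$ then forces both $|S|$ and $|U|$ finite, i.e. the algorithm terminates — but a terminating realization trivially satisfies \eqref{lm-eq: theta goes to infinity} vacuously or the claim is stated for non-terminating runs, a point I would clarify. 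Alternatively, and more simply, one shows directly: in any infinite tail, either unsuccessful steps occur infinitely often (done) or all steps are eventually successful (the argument above), and in both cases $\theta_k \to \infty$.
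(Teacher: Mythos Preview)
Your argument has two gaps. First, the claim that infinitely many unsuccessful iterations alone force $\theta_k\to\infty$ is false as stated: each unsuccessful step multiplies $\theta_k$ by $\gamma_2$, but a successful third-branch step multiplies by $\gamma_1$, and when $\gamma_1\gamma_2\le 1$ (the paper uses $\gamma_1=1/4$, $\gamma_2=4$) these can balance indefinitely. Your bookkeeping bound $\theta_K\ge\theta_0\,\gamma_1^{|S|}\gamma_2^{|U|}$ shows only that $|S|$ must grow with $|U|$, not that $\theta_K$ blows up; so the dichotomy ``either infinitely many unsuccessful (done) or eventually all successful'' is not actually established. Second, to pass from $\mathrm{Pred}_k\to 0$ to $\norm{\Jmk^TF_k}\to 0$ via Lemma~\ref{lm: cauchy decrease} you need a \emph{lower} bound on $\norm{d_k}$, not the upper bound $\norm{d_k}\le 1/\theta_k$ you wrote; an upper bound on $\norm{d_k}$ says nothing about $\min\{\norm{d_k},\,\norm{\Jmk^TF_k}/\norm{\Jmk^T\Jmk}\}$. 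The correct estimate is $\norm{d_k}\ge \norm{\Jmk^TF_k}/(\norm{\Jmk^T\Jmk}+\lambda_k)$, which under $\theta_k\le\Theta$ and $\norm{\Jmk^TF_k}\le\kbmj\kbf$ gives $\norm{d_k}\ge c\,\norm{\Jmk^TF_k}$ for a fixed $c>0$.

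Both gaps are repairable, and once you have the lower bound on $\norm{d_k}$ the case split on unsuccessful iterations is unnecessary. The paper's proof takes the direct route: assuming $\{k:\theta_k<\tilde\theta\}$ is infinite, it shows the set $\Omega_3$ of indices with $\theta_k<\tilde\theta/\gamma_1$ and $\theta_{k+1}\le\theta_k$ is infinite; on $\Omega_3$ the update rule \eqref{algo-eq: update LM para} forces $\rho_k\ge\eta_0$ and $\norm{\Jmk^TF_k}\ge\eta_1/\theta_k\ge\eta_1\gamma_1/\tilde\theta$, the lower bound on $\norm{d_k}$ then gives a uniform positive lower bound on $\norm{F_k}^2-\norm{F_{k+1}}^2$ for $k\in\Omega_3$, contradicting $\norm{F_k}\ge 0$. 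This is precisely the piece your ``multiplicative balance'' bookkeeping was missing, and it sidesteps the successful/unsuccessful dichotomy entirely.
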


\begin{proof}
	We prove by contradiction.
	Suppose that \eqref{lm-eq: theta goes to infinity} does not hold.
	Then there exists $\tilde \theta>0$ such that the set $\Omega_1=\{k\mid \theta_k< \tilde
		\theta\}$ is infinite. Hence the set $\Omega_2=\{k\mid \theta_k< \frac{\tilde \theta}{\gamma_1}\}$
	is also infinite because of $0<\gamma_1<1$.

	Define the set
	\begin{align}\label{2.12}
		\Omega_3=\{k\in \Omega_2\mid \theta_{k+1}\leq\theta_k\}.
	\end{align}
	We show that $ \Omega_3 $ is infinite too.
	Otherwise, there exists $K_0\in \Omega_2$ such that $\theta_{k+1}>\theta_k$ for all $k\geq K_0$ and $k\in \Omega_2$.
	Then there exists $K_1>K_0$ such that $\theta_{K_1}\geq 	\frac{\tilde \theta}{\gamma_1}$ and $\theta_{k+1}>\theta_k$ for all $K_0\leq k< K_1$.
	Since $\Omega_2$ is infinite, by \eqref{algo-eq: update LM para},
	there exists $K_2>K_1$ such that $\tilde \theta\leq\theta_{K_2}<\frac{\tilde \theta}{\gamma_1}$ and
	$\theta_k\geq \frac{\tilde \theta}{\gamma_1}$ for all $K_1\leq k< K_2$.
	By induction, we obtain $\theta_k\geq \tilde \theta$ for all $k>K_2$.
	This is a contradiction to the infiniteness of $\Omega_1$. So $\Omega_3$ is infinite.


	By \eqref{algo-eq: update LM para}, for all $ k\in\Omega_3 $,
	\begin{equation}\label{eq: JmkF geq eta_1/c}
		\norm{\Jmk^TF_k}\geq \frac{\eta_1}{\theta_k} \geq \frac{\eta_1\gamma_1}{\tilde \theta}.
	\end{equation}
	This, together with \eqref{assp-eq: Jmk bounded}, implies
	\begin{equation}
		\norm{d_k} \geq \frac{\norm{\Jmk^TF_k}}{\norm{\Jmk^T\Jmk}+\theta_k\norm{\Jmk^TF_k}} \geq \frac{\eta_1\gamma_1}{\tilde\theta (\kbmj^2+ \eta_1)}.
	\end{equation}
	It then follows from Lemma \ref{lm: cauchy decrease} and $ \rho_k\geq\eta_0$ that for any $k\in\Omega_3$,
	\begin{align}
		\norm{F_k}^2 - \norm{F(x_k+d_k)}^2 & \geq \eta_0 (\norm{F_k}^2-\norm{F_k+\Jmk d_k}^2) \nonumber\\
		& \geq \eta_0\norm{\Jmk^T F_k}\min\left\{\norm{d_k},\frac{\norm{\Jmk^T F_k}}{\norm{\Jmk^T \Jmk}}\right\} \nonumber\\
		& \geq \frac{\eta_0\eta_1^2\gamma_1^2}{\tilde \theta^2 (\kbmj^2+ \eta_1)}.
	\end{align}
	This contradicts the infiniteness of $ \Omega_3 $ and $ \norm{F_k} \geq 0$. So \eqref{lm-eq: theta goes to infinity} holds true. The proof is completed.
\end{proof}

We are now ready to prove that at least one accumulation point of the sequence generated by Algorithm \ref{algo: sparseDFOLM} is a stationary point of the sparse nonlinear least squares problem \eqref{eq: nonlinear least squares problems} with probability one.

\begin{theorem}\label{thm: liminf convergence}
	Suppose that Assumptions \ref{assp: J Lipschitz}, \ref{assp: L compact} and \ref{assp: Jmk bounded} hold and $ \beta\geq \frac{\log\gamma_2}{\log\gamma_2-\log\gamma_1} $.
	Let $\sigma_k=\norm{d_{k-1}}$ and $\xi_k =\sqrt{p}\klg \kbv^2\sigma_k/2 $.
	If $\{A(V_k)\} $ is a sequence of $ \beta $-probabilistically $ \kbv $-bounded RIP matrices,
	then the random iterates $ \{X_k\} $ generated by Algorithm \ref{algo: sparseDFOLM} satisfies
	\begin{equation}\label{thm-eq: liminf convergence}
		\P\left(\liminf_{k\rightarrow\infty}\norm{J^T(X_k)F(X_k)}=0\right)=1.
	\end{equation}
\end{theorem}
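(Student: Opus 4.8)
The target \eqref{thm-eq: liminf convergence} asserts that, almost surely, the gradients $\nabla f(X_k)=J^T(X_k)F(X_k)$ tend to zero along some subsequence, and the plan is to deduce this from the almost-sure blow-up $\Theta_k\to+\infty$ of Lemma \ref{lm: theta goes to infinity} by a stopping-time argument. It suffices to show that, for each fixed $\varepsilon>0$, the index set $\{k:\norm{J^T(X_k)F(X_k)}<\varepsilon\}$ is almost surely infinite; intersecting over $\varepsilon\in\{1/n:n\geq1\}$ then gives \eqref{thm-eq: liminf convergence}. So fix $\varepsilon>0$. The first thing I would record is a deterministic ``threshold'' fact: there is a constant $C_\varepsilon>0$ such that on any iteration $k$ at which $\Theta_k\geq C_\varepsilon$, $\norm{J^T(X_k)F(X_k)}\geq\varepsilon$, and $T_k$ occurs, Theorem \ref{thm: Jacobian accurate} gives $\norm{\Jmk^TF_k}\geq\norm{J_k^TF_k}-\kbf\kej/\Theta_k\geq\varepsilon/2$; hence \eqref{thetak} holds, so the step is successful by Lemma \ref{lm: M1 lambda large then success}, and since moreover $\norm{\Jmk^TF_k}\geq\varepsilon/2>\eta_2/\Theta_k$ and $\gamma_1\Theta_k\geq\theta_{\min}$, the update \eqref{algo-eq: update LM para} forces $\Theta_{k+1}=\gamma_1\Theta_k$. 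In all cases one has $\Theta_{k+1}\leq\gamma_2\Theta_k$.

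Next I would set up stopping times for the filtration $(\mathcal F_{k-1})_k$, using that $X_k$ and $\Theta_k$ are $\mathcal F_{k-1}$-measurable: let $\kappa:=\inf\{k:\Theta_k\geq C_\varepsilon\}$, and for $m\geq1$ let $\tau_m$ be the index of the $m$-th iteration with $\norm{J^T(X_k)F(X_k)}<\varepsilon$ (with $\tau_m=\infty$ if there are fewer than $m$ such, and $\tau_0:=-1$); by Lemma \ref{lm: theta goes to infinity}, $\kappa<\infty$. Arguing by induction on $m$, suppose $\tau_{m-1}<\infty$ almost surely; put $k_0:=\max\{\kappa,\tau_{m-1}+1\}$ and consider $\Phi_k:=\log\Theta_{k\wedge\tau_m}$ for $k\geq k_0$. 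When $\tau_{m-1}<k<\tau_m$ one has $\norm{J^T(X_k)F(X_k)}\geq\varepsilon$, and when in addition $k\geq\kappa$ also $\Theta_k\geq C_\varepsilon$, so the threshold fact gives $\log\Theta_{k+1}-\log\Theta_k\leq\mathbf{1}_{T_k}\log\gamma_1+(1-\mathbf{1}_{T_k})\log\gamma_2$ on the event $\{k_0\leq k<\tau_m\}\in\mathcal F_{k-1}$; since $\P(T_k\mid\mathcal F_{k-1})\geq\beta$ and $\log\gamma_1<0<\log\gamma_2$, it follows that
\[
\E\left[\Phi_{k+1}-\Phi_k\mid\mathcal F_{k-1}\right]\leq\beta\log\gamma_1+(1-\beta)\log\gamma_2\leq0,
\]
the last inequality being precisely the hypothesis $\beta\geq\log\gamma_2/(\log\gamma_2-\log\gamma_1)$ (and for $k\geq\tau_m$ the increment of $\Phi_k$ vanishes). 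Hence $\{\Phi_k\}_{k\geq k_0}$ is a supermartingale bounded below by $\log\theta_{\min}$, so by the supermartingale convergence theorem it converges almost surely to a finite limit. On $\{\tau_m=\infty\}$, however, $\Phi_k=\log\Theta_k\to+\infty$ by Lemma \ref{lm: theta goes to infinity}, which forces $\P(\tau_m=\infty)=0$. By induction every $\tau_m$ is finite almost surely, i.e.\ $\norm{J^T(X_k)F(X_k)}<\varepsilon$ for infinitely many $k$ almost surely, and intersecting over $\varepsilon\in\{1/n:n\geq1\}$ yields \eqref{thm-eq: liminf convergence}.

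The main obstacle — and where essentially all the care is needed — is making the supermartingale property legitimate. The chain ``$T_k$ occurs $\Rightarrow$ the model is first-order accurate (Theorem \ref{thm: Jacobian accurate}) $\Rightarrow$ the step is successful (Lemmas \ref{lm: cauchy decrease}--\ref{lm: M1 lambda large then success}) $\Rightarrow$ $\Theta_{k+1}=\gamma_1\Theta_k$ (the update \eqref{algo-eq: update LM para})'' is valid only when $\norm{J^T(X_k)F(X_k)}$ is bounded away from $0$ and $\Theta_k$ is already large, so both constraints must be carried by stopping times ($\tau_m$ and $\kappa$) rather than by a non-measurable ``for all sufficiently large $k$'' clause; the lower bound $\Theta_k\geq\theta_{\min}$ built into Algorithm \ref{algo: sparseDFOLM} is exactly what lets the supermartingale converge, and the assumption $\beta\geq\log\gamma_2/(\log\gamma_2-\log\gamma_1)$ is exactly what makes the one-step drift of $\log\Theta_k$ nonpositive. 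The remaining ingredients — the Lipschitz bounds from Assumptions \ref{assp: J Lipschitz} and \ref{assp: L compact}, the Cauchy-type decrease of Lemma \ref{lm: cauchy decrease}, and the compressed-sensing recovery estimate underlying Theorem \ref{thm: Jacobian accurate} — are already in hand.
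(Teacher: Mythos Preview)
There is a genuine gap in your handling of the threshold condition on $\Theta_k$. You define $\kappa:=\inf\{k:\Theta_k\geq C_\varepsilon\}$ as the \emph{first} time $\Theta_k$ reaches $C_\varepsilon$, and then assert that ``when in addition $k\geq\kappa$ also $\Theta_k\geq C_\varepsilon$''. This is false: $\kappa$ only gives $\Theta_\kappa\geq C_\varepsilon$, and nothing prevents $\Theta_k$ from dropping below $C_\varepsilon$ afterwards --- indeed your own threshold fact says that on a good iteration (with $T_k$ occurring) one has $\Theta_{k+1}=\gamma_1\Theta_k<\Theta_k$, so $\Theta_{\kappa+1}$ may already be below $C_\varepsilon$. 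Moreover, if $\kappa\leq\tau_{m-1}$ then $k_0=\tau_{m-1}+1$ and you have no control over $\Theta_{k_0}$ at all. Without $\Theta_k\geq C_\varepsilon$ for every $k$ in the range $[k_0,\tau_m)$, the implication ``$T_k$ occurs $\Rightarrow\Theta_{k+1}=\gamma_1\Theta_k$'' fails and the supermartingale inequality for $\Phi_k$ is not justified. You correctly diagnose that ``for all sufficiently large $k$'' is not a stopping-time clause, but the fix you propose --- replacing it by a first hitting time --- does not carry the constraint you need. The random time you actually want, namely the last $k$ with $\Theta_k<C_\varepsilon$ (finite a.s.\ by Lemma~\ref{lm: theta goes to infinity}), is precisely the non-stopping-time you were trying to avoid.

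The paper sidesteps this difficulty by decoupling the probabilistic and pathwise parts of the argument. It introduces the auxiliary process $W_j=\sum_{k\leq j}(\ITk-\beta)$, a submartingale with bounded, nonvanishing increments defined for \emph{all} $j$ and making no reference to any random threshold; a standard result then yields $\limsup_j W_j=+\infty$ almost surely. The random index $K$ after which $\theta_k$ stays above the threshold is used only in a \emph{pathwise} comparison: on the putative positive-probability event where $\norm{J_k^TF_k}\geq\varepsilon$ for all $k$, one has $z_{k+1}-z_K\geq w_k-w_{K-1}$ for all $k>K$ with $z_k=(1-\beta)\log_{\gamma_1}\theta_k$, whence $\limsup z_j=+\infty$, contradicting $\theta_k\to+\infty$. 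Because $K$ never enters a conditional expectation, its failure to be a stopping time is harmless. Your overall strategy can be repaired along these lines, but not with the stopping time $\kappa$ as written.
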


\begin{proof}
	Consider the random walk
	\begin{equation}
		W_j = \sum_{k=0}^{j}\left(\ITk -\beta\right),
	\end{equation}
	where $ \ITk $ is the indicator function of the event $ T_k $. Then
	\begin{equation}
		W_j = \left\{\begin{array}{ll}
			W_{j-1}+1-\beta, & \text{if}\ \mathbf{1}_{T_j}=1, \\
			W_{j-1}-\beta, & \text{if}\ \mathbf{1}_{T_j}=0. \\
		\end{array}\right.
	\end{equation}
	By \eqref{def-eq: Tk submartingale},
	\begin{align}
		\E\left[W_j\mid \mathcal{F}_{j-1}\right] & = \E\left[W_{j-1}\mid \mathcal{F}_{j-1}\right] +\E\left[\mathbf{1}_{T_j} -\beta\mid \mathcal{F}_{j-1}\right]\nonumber \\
		& = W_{j-1} + \P\left(T_j\mid\mathcal{F}_{j-1}\right) -\beta \nonumber\\
		& \geq W_{j-1}.
	\end{align}
	Hence $ W_j $ is a submartingale with bounded increments.
	Thus, according to \cite[Theorem 4.1]{bandeira2014ConvergenceTrustRegionMethods}, we obtain
	\begin{equation}
		\P\left(\Big\{\lim_{j\rightarrow\infty}W_j \text{ exists and is finite}\Big\} \, {\textstyle \bigcup}\, \Big\{\limsup_{j\rightarrow\infty}W_j = +\infty \Big\} \right) = 1.
	\end{equation}
	Since $ |W_{j+1}-W_j |\geq \min\{\beta, 1-\beta\}$, the sequence $ \{W_j\} $ has no limit, so we have
	\begin{equation}\label{eq: Wj infty}
		\P\left( \limsup_{j\rightarrow\infty}W_j = +\infty \right) = 1.
	\end{equation}

	Now we prove \eqref{thm-eq: liminf convergence} by contradiction. Without loss of generality, suppose that there exists $ \varepsilon>0 $ such that
	\begin{equation}\label{eq: liminf convergence contradiction assumption}
		\norm{J^T(X_k)F(X_k)} \geq \varepsilon
	\end{equation}
	holds for all $ k $ with positive probability.
	By Lemma \ref{lm: theta goes to infinity}, for any realization of Algorithm \ref{algo: sparseDFOLM}, there exists $ K >1$ such that for all $ k\geq K $,
	\begin{equation}\label{eq: liminf convergence theta}
		\theta_k\geq \max\left\{\frac{2\kej\kbf}{\varepsilon}, \frac{2M}{\varepsilon}, \frac{2\eta_2}{\varepsilon}, \frac{\theta_{\min}}{\gamma_1}\right\},
	\end{equation}
	where $ M $ is a constant given in \eqref{thetak}.
	We discuss the case when $ \norm{J^T(X_k)F(X_k)} \geq \varepsilon $ happens. Define the random variable $ Z_k $ with the realization
	\begin{equation}
		z_k = (1-\beta)\log_{\gamma_1}(\theta_k).
	\end{equation}
	In the following, we consider two cases: $ \ITk=1 $ and $ \ITk=0 $.

	(i) $ \ITk=1 $. By Theorem \ref{thm: Jacobian accurate}, we have
	$ \displaystyle \norm{\Jmk-J_k}\leq \frac{\kej}{\theta_k} $. This, together with \eqref{eq: liminf convergence theta}, yields
	\begin{equation}\label{eq: liminf convergence JmF}
		\norm{\Jmk^T F_k} \geq \norm{J_k^T F_k} -\norm{\Jmk^T F_k - J_k^T F_k} \geq \varepsilon - \frac{\kej\kbf}{\theta_k} \geq \frac{\varepsilon}{2}.
	\end{equation}
	Combining \eqref{eq: liminf convergence theta} with \eqref{eq: liminf convergence JmF}, we obtain
	\begin{equation}
		\theta_k \geq \frac{2M}{\varepsilon} \geq \frac{M}{\norm{\Jmk^T F_k}}.
	\end{equation}
	It then follows from Lemma \ref{lm: M1 lambda large then success} that $ \rho_k \geq \eta_0 $. Since
	\begin{equation}
		\norm{\Jmk^T F_k}\geq \frac{\varepsilon}{2} \geq \frac{\eta_2}{\theta_k}, \ \gamma_1\theta_k\geq\theta_{\min},
	\end{equation}
	by the update rule of $ \theta_k $, we have $ \theta_{k+1} = \gamma_1\theta_k $.
	Thus
	\begin{equation}\label{eq: zk wk a}
		z_{k+1} - z_k = (1-\beta)\left(\log_{\gamma_1}(\gamma_1\theta_k) - \log_{\gamma_1}(\theta_k) \right)= 1-\beta = w_k - w_{k-1},
	\end{equation}
	where $ w_k $ is a realization of $ W_k $.

	(ii) $ \ITk =0 $. Note that $ \theta_{k+1}\leq \gamma_2\theta_k $ always holds. Hence
	\begin{equation}
		z_{k+1} - z_k \geq (1-\beta)\left(\log_{\gamma_1}(\gamma_2\theta_k) - \log_{\gamma_1}(\theta_k) \right)= (1-\beta)\log_{\gamma_1}(\gamma_2).
	\end{equation}
	Since $ \beta\geq \frac{\log\gamma_2}{\log\gamma_2-\log\gamma_1} $, we have
	\begin{equation}\label{eq: zk wk b}
		z_{k+1} - z_k \geq -\beta = w_{k} - w_{k-1}.
	\end{equation}
	It then follows from \eqref{eq: zk wk a} and \eqref{eq: zk wk b} that
	\begin{equation}\label{eq: zk geq wk}
		z_{k+1} - z_{K} \geq w_k-w_{K-1}, \ \forall k>K.
	\end{equation}
	Since $ z_{K} $ and $ w_{K-1} $ are finite, \eqref{eq: Wj infty} and \eqref{eq: zk geq wk} imply that
	\begin{equation}\label{eq: Zj infty}
		\P\left( \limsup_{j\rightarrow\infty}Z_j = +\infty \right) = 1,
	\end{equation}
	which contradicts $ \theta_k \rightarrow +\infty $ and $ \gamma_1<1 $. So \eqref{eq: liminf convergence contradiction assumption} cannot be true. The proof is completed.
\end{proof}


\begin{lemma}\label{lm: sum reciprocal theta finite}
	Suppose that Assumptions \ref{assp: J Lipschitz}, \ref{assp: L compact} and \ref{assp: Jmk bounded} hold and $ \beta\geq \frac{\log\gamma_2}{\log\gamma_2-\log\gamma_1} $.
	Let $\sigma_k=\norm{d_{k-1}}$ and $\xi_k =\sqrt{p}\klg \kbv^2\sigma_k/2 $.
	For any given $ \varepsilon>0 $, define
	\begin{equation}
		\Omega =\{k \mid \norm{J^T(X_k)F(X_k)}>\varepsilon\}.
	\end{equation}
	If $\{A(V_k)\} $ is a sequence of $ \beta $-probabilistically $ \kbv $-bounded RIP matrices, then the sequence $ \{\Theta_k\} $ generated by Algorithm \ref{algo: sparseDFOLM} satisfies
	\begin{equation}
		\P\left(\sum_{k\in\Omega}\frac{1}{\Theta_k}<+\infty\right)=1.
	\end{equation}
\end{lemma}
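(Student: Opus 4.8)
The plan is to reduce the statement to a short nonnegative–supermartingale argument after isolating a deterministic sufficient–decrease estimate that holds once the model happens to be accurate and the LM parameter has grown past a fixed threshold. Set
\begin{equation*}
	\bar\theta:=\max\left\{\frac{2\kej\kbf}{\varepsilon},\ \frac{2M}{\varepsilon},\ \frac{2\kbmj^2}{\varepsilon}\right\},
\end{equation*}
where $M$ is the constant from \eqref{thetak}, and let $\ITk$ denote the indicator of $T_k$. Since $\theta_k\ge\theta_{\min}$ always and, by Lemma \ref{lm: theta goes to infinity}, $\theta_k\to+\infty$ along every realization, the index set $\{k\mid\theta_k<\bar\theta\}$ is finite for every realization, so $\sum_{k\in\Omega}\mathbf{1}\{\theta_k<\bar\theta\}/\theta_k$ is finite pathwise. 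Hence it will suffice to prove $\sum_{k\in\Omega}\mathbf{1}\{\theta_k\ge\bar\theta\}/\theta_k<+\infty$ almost surely.

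First I would establish the key estimate: for every realization and every $k$ with $\norm{J_k^TF_k}>\varepsilon$, $\theta_k\ge\bar\theta$ and $T_k$ occurring, the $k$-th step is accepted and
\begin{equation*}
	\norm{F_k}^2-\norm{F_{k+1}}^2\ \ge\ \frac{\eta_0\varepsilon}{4\theta_k}.
\end{equation*}
This chains the earlier results: Theorem \ref{thm: Jacobian accurate} gives $\norm{\Jmk-J_k}\le\kej/\theta_k$, so by \eqref{assp-eq: F bounded} and $\theta_k\ge2\kej\kbf/\varepsilon$ one has $\norm{\Jmk^TF_k}\ge\norm{J_k^TF_k}-\kej\kbf/\theta_k\ge\varepsilon/2$; then $\theta_k\ge2M/\varepsilon\ge M/\norm{\Jmk^TF_k}$, so Lemma \ref{lm: M1 lambda large then success} yields $\rho_k\ge\eta_0$ and $x_{k+1}=x_k+d_k$; finally, from $\lambda_k=\theta_k\norm{\Jmk^TF_k}$, \eqref{assp-eq: Jmk bounded} and $\theta_k\ge2\kbmj^2/\varepsilon$ one gets $\frac{1}{2\theta_k}\le\norm{d_k}\le\frac{1}{\theta_k}\le\norm{\Jmk^TF_k}/\norm{\Jmk^T\Jmk}$, whence Lemma \ref{lm: cauchy decrease} gives $\mathrm{Pred}_k\ge\norm{\Jmk^TF_k}\,\norm{d_k}\ge\varepsilon/(4\theta_k)$ and $\mathrm{Ared}_k\ge\eta_0\,\mathrm{Pred}_k$ closes it. (If one wants the step accepted with a strict inequality $\rho_k>\eta_0$, it is enough to double $M$ inside $\bar\theta$, turning Lemma \ref{lm: M1 lambda large then success} into $\abs{\rho_k-1}\le(1-\eta_0)/2$.)

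Then, for the probabilistic step, I would use that $x_k$ and $\theta_k$ are determined by $V_0,\dots,V_{k-1}$, so that $\phi_k:=\norm{F_k}^2$, $\theta_k$ and the event $\{k\in\Omega\}$ are $\mathcal{F}_{k-1}$-measurable, whereas $\ITk$ is $\mathcal{F}_k$-measurable with $\P(T_k\mid\mathcal{F}_{k-1})\ge\beta$ by \eqref{def-eq: Tk submartingale}. Along every realization $\norm{F_k}$ is non-increasing ($\phi_{k+1}=\phi_k$ on rejected steps, and $\phi_k-\phi_{k+1}=\mathrm{Ared}_k\ge\eta_0\,\mathrm{Pred}_k\ge0$ on accepted ones), so the key estimate gives, pathwise, $\phi_{k+1}\le\phi_k-\ITk\,\mathbf{1}_{\Omega}(k)\,\mathbf{1}\{\theta_k\ge\bar\theta\}\,\frac{\eta_0\varepsilon}{4\theta_k}$; taking $\E[\,\cdot\mid\mathcal{F}_{k-1}]$ yields $\E[\phi_{k+1}\mid\mathcal{F}_{k-1}]\le\phi_k-c_k$ with $c_k:=\mathbf{1}_{\Omega}(k)\,\mathbf{1}\{\theta_k\ge\bar\theta\}\,\frac{\eta_0\varepsilon\beta}{4\theta_k}\ge0$ and $\mathcal{F}_{k-1}$-measurable. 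Therefore $S_k:=\phi_k+\sum_{j<k}c_j$ is a nonnegative supermartingale (each $S_k$ is bounded, since $\phi_k\le\kbf^2$), hence converges a.s. to a finite limit; as $\phi_k\ge0$, this forces $\sum_k c_k<+\infty$ a.s., i.e. $\sum_{k\in\Omega}\mathbf{1}\{\theta_k\ge\bar\theta\}/\theta_k<+\infty$ a.s. Combining with the pathwise-finite contribution of the indices where $\theta_k<\bar\theta$ gives the claim.

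The main obstacle will be the ``burn-in'' effect: the sufficient-decrease estimate is only available after $\theta_k$ has passed $\bar\theta$, and the first time this happens is random with no obvious integrable bound, so one cannot simply sum a decrease inequality in expectation starting from $k=0$. Trimming to the tail with the factor $\mathbf{1}\{\theta_k\ge\bar\theta\}$ — on which $c_k\le\eta_0\varepsilon\beta/(4\bar\theta)$ — and disposing of the finitely many remaining terms by Lemma \ref{lm: theta goes to infinity} is precisely what keeps the supermartingale well behaved; the only other point requiring care is the measurability bookkeeping, namely that $\theta_k$ and $x_k$ (hence $\mathbf{1}_{\Omega}(k)$) depend only on $V_0,\dots,V_{k-1}$, whereas $\Jmk$ — and therefore $\ITk$ — also depends on $V_k$.
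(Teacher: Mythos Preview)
Your argument is correct, and it takes a genuinely different route from the paper. The paper partitions $\Omega$ into $\Omega_1=\{k\in\Omega\mid T_k\text{ occurs}\}$ and $\Omega_2=\Omega\setminus\Omega_1$: on $\Omega_1$ it uses the same sufficient-decrease estimate you derive to get $\sum_{k\in\Omega_1}1/\theta_k<\infty$ pathwise, and then controls $\Omega_2$ via the auxiliary process $G_k=\sum_{j\le k}(\mathbf{1}_{T_j}-\beta)\Theta_j^{-1}$, which is a submartingale with bounded increments; invoking the Bandeira--Scheinberg--Vicente dichotomy (a submartingale with bounded increments either converges or has $\limsup=+\infty$), the event $\sum_{k\in\Omega_2}1/\Theta_k=+\infty$ would force $\limsup G_k=-\infty$, a contradiction. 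You instead split $\Omega$ by the threshold $\theta_k\gtrless\bar\theta$, dispose of the small-$\theta_k$ part pathwise via Lemma~\ref{lm: theta goes to infinity}, and absorb both accurate and inaccurate iterations in one stroke through the nonnegative supermartingale $S_k=\phi_k+\sum_{j<k}c_j$, relying only on Doob's convergence theorem. Your approach is more elementary in that it avoids the bounded-increment dichotomy lemma altogether and exploits directly that $\phi_k$ is monotone; the paper's decomposition, on the other hand, makes the separate contributions of ``good'' and ``bad'' iterations explicit and reuses the submartingale machinery already set up in Theorem~\ref{thm: liminf convergence}. Both proofs rest on the same deterministic core (Lemma~\ref{lm: M1 lambda large then success} plus the Cauchy-decrease bound), and your measurability bookkeeping---$x_k,\theta_k\in\mathcal{F}_{k-1}$ while $\mathbf{1}_{T_k}\in\mathcal{F}_k$---is exactly what is needed.
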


\begin{proof}
	Define
	\begin{equation}
		\Omega_1 = \{k\in \Omega \mid T_k \text{ happens} \} \text{ and } \Omega_2 = \Omega\backslash \Omega_1.
	\end{equation}
	It follows from Theorem \ref{thm: Jacobian accurate} that
	$ \displaystyle \norm{\Jmk-J_k}\leq \frac{\kej}{\theta_k} $ for all $ k\in\Omega_1 $. We first prove that
	\begin{equation}\label{eq: theta omega 1}
		\sum_{k\in\Omega_1}\frac{1}{\theta_k}<+\infty.
	\end{equation}
	Obviously, \eqref{eq: theta omega 1} holds true when $ \Omega_1 $ is finite.
	When $ \Omega_1 $ is infinite, by a similar analysis as that in Theorem \ref{thm: liminf convergence}, we have
	\begin{equation}\label{eq: JmkFk geq epsilon/2}
		\norm{\Jmk^T F_k}\geq\frac{\varepsilon}{2} \text{ and } \rho_k \geq \eta_0
	\end{equation}
	for sufficiently large $ k\in\Omega_1 $.
	It then follows from \eqref{assp-eq: F bounded}, \eqref{assp-eq: Jmk bounded}, \eqref{eq: JmkFk geq epsilon/2} and Lemma \ref{lm: cauchy decrease} that for sufficiently large $ k\in\Omega_1 $,
	\begin{align}
		\norm{F_k}^2 - \norm{F(x_k+d_k)}^2 & \geq \eta_0 (\norm{F_k}^2-\norm{F_k+\Jmk d_k}^2) \nonumber\\
		& \geq \eta_0\norm{\Jmk^T F_k}\min\left\{\norm{d_k},\frac{\norm{\Jmk^T F_k}}{\norm{\Jmk^T \Jmk}}\right\}\nonumber \\
		& \geq \eta_0\frac{\norm{\Jmk^T F_k}^2}{\norm{\Jmk^T \Jmk}+\theta_k\norm{\Jmk^T F_k}} \nonumber\\
		& \geq \frac{\eta_0\theta_k}{\kbmj^2+\theta_k\kbmj\kbf} \frac{\norm{\Jmk^T F_k}^2}{\theta_k} \nonumber\\
		& \geq \frac{\eta_0\theta_{\min}\varepsilon^2}{4(\kbmj^2+\theta_{\min}\kbmj\kbf)}\frac{1}{\theta_k}.
	\end{align}
	Thus \eqref{eq: theta omega 1} holds true.

	Define the random variable $G_0 = 0$ and $G_k = G_{k-1} + (\ITk - \beta)\Theta_k^{-1} ~(k \geq 1) $. Since $\{A(V_k)\} $ is a sequence of $ \beta $-probabilistically $ \kbv $-bounded RIP matrices,
	\begin{align}
		\E\left[G_k\mid \mathcal{F}_{k-1}\right] & = \E\left[G_{k-1}\mid \mathcal{F}_{k-1}\right] + \E\left[(\ITk-\beta)\Theta_k^{-1}\mid \mathcal{F}_{k-1}\right] \nonumber\\
		& = G_{k-1} + \Theta_k^{-1}\E\left[\ITk-\beta\mid \mathcal{F}_{k-1}\right] \nonumber \\
		& \geq G_{k-1}.
	\end{align}
	Thus $ G_k $ is a submartingale with bounded increments. It then follows from \cite[Theorem 4.1]{bandeira2014ConvergenceTrustRegionMethods} that
	\begin{equation}\label{eq: G_k limit}
		\P\left(\Big\{\lim_{k\rightarrow\infty}G_k \text{ exists and is finite}\Big\} \,{\textstyle \bigcup}\, \Big\{\limsup_{k\rightarrow\infty}G_k = +\infty \Big\} \right) = 1.
	\end{equation}
	Note that
	\begin{equation}
		G_k=(1-\beta)\sum_{k\in\Omega_1}\frac{1}{\Theta_k}-\beta\sum_{k\in\Omega_2}\frac{1}{\Theta_k}.
	\end{equation}
	If the event
	$$
		\left\{\sum_{k\in\Omega_2}\frac{1}{\Theta_k} = +\infty \right\}
	$$
	happens with some positive probability, then by \eqref{eq: theta omega 1},
	the event
	\begin{equation}\label{eq: event 2}
		\Big\{\limsup_{k\rightarrow \infty} G_k = -\infty\Big\}
	\end{equation}
	also happens with some positive probability, which is a contradiction to \eqref{eq: G_k limit}.
	Thus
	\begin{equation}
		\P\left(\sum_{k\in\Omega_2}\frac{1}{\Theta_k}<+\infty\right) =1
	\end{equation}
	and
	\begin{equation}
		\sum_{k\in\Omega}\frac{1}{\Theta_k} = \sum_{k\in\Omega_1}\frac{1}{\Theta_k}+\sum_{k\in\Omega_2}\frac{1}{\Theta_k}<+\infty
	\end{equation}
	holds almost surely.
\end{proof}

Now we prove that the sequence generated by Algorithm \ref{algo: sparseDFOLM} converges to a first-order stationary point of the problem \eqref{eq: nonlinear least squares problems} almost surely.

\begin{theorem}\label{thm: lim convergence}

	Suppose that Assumptions \ref{assp: J Lipschitz}, \ref{assp: L compact} and \ref{assp: Jmk bounded} hold and $ \beta\geq \frac{\log\gamma_2}{\log\gamma_2-\log\gamma_1} $.
	Let $\sigma_k=\norm{d_{k-1}}$ and $\xi_k =\sqrt{p}\klg \kbv^2\sigma_k/2 $.
	If $\{A(V_k)\} $ is a sequence of $ \beta $-probabilistically $ \kbv $-bounded RIP matrices,
	then the random iterates $ \{X_k\} $ generated by Algorithm \ref{algo: sparseDFOLM} satisfies
	\begin{equation}\label{thm-eq: lim convergence}
		\P\left(\lim_{k\rightarrow\infty}\norm{J^T(X_k)F(X_k)}=0\right)=1.
	\end{equation}
\end{theorem}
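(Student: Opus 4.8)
The plan is to upgrade the $\liminf$ statement of Theorem \ref{thm: liminf convergence} to a $\lim$ statement by a contradiction argument that exploits, simultaneously, the three facts already in hand: $\liminf_k\norm{J^T(X_k)F(X_k)}=0$ almost surely (Theorem \ref{thm: liminf convergence}), $\Theta_k\to+\infty$ for every realization (Lemma \ref{lm: theta goes to infinity}), and $\sum_{k\in\Omega}\Theta_k^{-1}<+\infty$ almost surely, where $\Omega=\{k\mid\norm{J^T(X_k)F(X_k)}>\varepsilon\}$ (Lemma \ref{lm: sum reciprocal theta finite}). Since it suffices to show $\P\big(\limsup_k\norm{J^T(X_k)F(X_k)}>\varepsilon\big)=0$ for every rational $\varepsilon>0$, I would fix such an $\varepsilon$, assume for contradiction that the event $E=\{\limsup_k\norm{J^T(X_k)F(X_k)}>2\varepsilon\}$ has positive probability, and work on the (still positive-probability) intersection of $E$ with the two probability-one events above.

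On that intersection I would, for each realization, pick an increasing sequence $m_i\to\infty$ with $\norm{J^T(X_{m_i})F(X_{m_i})}>2\varepsilon$ and set $n_i=\max\{k<m_i\mid\norm{J^T(X_k)F(X_k)}\le\varepsilon\}$. Because $\liminf_k\norm{J^T(X_k)F(X_k)}=0$, the set of indices with $\norm{J^T(X_k)F(X_k)}\le\varepsilon$ is infinite, so $n_i$ is well defined, and its values tend to infinity, which forces $n_i\to\infty$ as $i\to\infty$. By maximality of $n_i$, every index $k$ with $n_i<k\le m_i$ belongs to $\Omega$.

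The core estimate then chains the reverse triangle inequality, the Lipschitz continuity of $x\mapsto J(x)^TF(x)$ on $\mathcal{L}$ from \eqref{eq: JTF Lipschitz} (all iterates lie in $\mathcal{L}_0\subseteq\mathcal{L}$), and the step bound $\norm{X_{k+1}-X_k}\le\norm{d_k}\le\Theta_k^{-1}$ (the step equals $d_k$ on successful iterations and $0$ otherwise, and $\norm{d_k}\le\Theta_k^{-1}$ follows from the LM equation \eqref{eq: lm equation}). Splitting the displacement into the single step out of $X_{n_i}$ and the remaining steps, which all fall in $\Omega$, yields
\begin{equation*}
	\varepsilon < \norm{J^T(X_{m_i})F(X_{m_i})}-\norm{J^T(X_{n_i})F(X_{n_i})} \le (\klf^2+\klj\kbf)\,\norm{X_{m_i}-X_{n_i}} \le (\klf^2+\klj\kbf)\left(\frac{1}{\Theta_{n_i}}+\sum_{\substack{k\in\Omega\\ k\ge n_i+1}}\frac{1}{\Theta_k}\right).
\end{equation*}
Letting $i\to\infty$, the first term in the bracket vanishes because $n_i\to\infty$ and $\Theta_k\to+\infty$, while the tail sum vanishes because $\sum_{k\in\Omega}\Theta_k^{-1}<+\infty$ almost surely; this gives $\varepsilon\le 0$, a contradiction. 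Hence $\P(E)=0$ for every rational $\varepsilon>0$, so $\limsup_k\norm{J^T(X_k)F(X_k)}=0$ almost surely, which is exactly \eqref{thm-eq: lim convergence} since the quantity is nonnegative.

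I expect the only delicate point to be the bookkeeping around the index $n_i$: one must argue $n_i\to\infty$ so that the lone ``out-of-$\Omega$'' step $\norm{X_{n_i+1}-X_{n_i}}\le\Theta_{n_i}^{-1}$ is controlled via $\Theta_k\to+\infty$ (the series $\sum_{k\in\Omega}\Theta_k^{-1}$ does not include that index), and one must verify that the remaining indices of each excursion genuinely lie in $\Omega$ so that the tail of the almost-surely convergent series can be invoked. Everything else is a routine combination of the three earlier results.
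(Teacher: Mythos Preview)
Your proposal is correct and follows essentially the same excursion argument as the paper's proof: both pick indices where $\norm{J_k^TF_k}$ exceeds $2\varepsilon$, trace back to the last index where it was at most $\varepsilon$, and use the Lipschitz estimate \eqref{eq: JTF Lipschitz} together with $\norm{d_k}\le\Theta_k^{-1}$ to bound $\varepsilon$ by $(\klf^2+\klj\kbf)\big(\Theta_{n_i}^{-1}+\sum_{k\in\Omega,\,k>n_i}\Theta_k^{-1}\big)$. The only cosmetic difference is the final step---the paper argues that each excursion contributes a fixed positive amount to $\sum_{k\in\Omega}\Theta_k^{-1}$ and hence the series diverges, whereas you pass to the limit $i\to\infty$ and use the vanishing tail of the (almost surely) convergent series to force $\varepsilon\le 0$; these are equivalent formulations of the same contradiction.
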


\begin{proof}
	We prove by contradiction. Suppose that \eqref{thm-eq: lim convergence} is not true.
	Then there exists $ \varepsilon>0 $ such that $ \Omega_1 =\{k\mid \norm{J^T_k F_k}>2\varepsilon\} $ is infinite holds with positive probability. Define
	\begin{equation}
		\Omega_2 =\{k\mid \norm{J^T_k F_k}>\varepsilon\}.
	\end{equation}
	By Theorem \ref{thm: liminf convergence}, there exist infinite integer pairs $ (k', k'') $ with $ 0<k'<k'' $ such that
	\begin{equation}
		\norm{J^T_{k'}F_{k'}}\leq\varepsilon,\ \norm{J^T_{k'+1}F_{k'+1}}>\varepsilon, \ \norm{J^T_{k''}F_{k''}}> 2\varepsilon
	\end{equation}
	and for any $ k $ such that $ k'< k < k'' $,
	\begin{equation}
		\varepsilon<\norm{J^T_k F_k} \leq 2\varepsilon
	\end{equation}
	hold with positive probability.
	Hence by \eqref{eq: JTF Lipschitz},
	\begin{align}
		\varepsilon < & \Big| \norm{J^T_{k''}F_{k''}} -\norm{J^T_{k'}F_{k'}} \Big| \nonumber\\
		\leq & \sum_{l=k'}^{k''-1} \Big| \norm{J^T_{l+1}F_{l}} -\norm{J^T_{l}F_{l}} \Big| \nonumber\\
		\leq & \sum_{l=k'}^{k''-1} (\klf^2+\klj\kbf)\norm{d_l} \nonumber\\
		\leq & (\klf^2+\klj\kbf)\sum_{l=k'}^{k''-1} \frac{1}{\theta_l}\nonumber\\
		= & (\klf^2+\klj\kbf)\left(\frac{1}{\theta_{k'}}+\sum_{l=k'+1}^{k''-1} \frac{1}{\theta_l}\right).
	\end{align}
	It then follows from Lemma \ref{lm: theta goes to infinity} that $ \frac{1}{\theta_{k'}}<\frac{\varepsilon}{2(\klf^2+\klj\kbf)} $ holds for sufficiently large $ k' $.
	Thus
	$ \sum_{l=k'+1}^{k''-1} \frac{1}{\theta_l} >\frac{\varepsilon}{2(\klf^2+\klj\kbf)} >0$.
	Therefore we have
	\begin{equation}
		\sum_{l\in\Omega_2} \frac{1}{\theta_l} = +\infty,
	\end{equation}
	which contradicts Lemma \ref{lm: sum reciprocal theta finite}.
	So \eqref{thm-eq: lim convergence} holds true.
	The proof is completed.
\end{proof}

\section{Numerical experiments}\label{sec: numerical results}
In this section, we test Algorithm \ref{algo: sparseDFOLM} (DFLM-SNLS) and compare it with four MATLAB's built-in optimization algorithms: \verb|fminunc| with the BFGS method (fminunc-BFGS), and \verb|lsqnonlin| with the LM method (lsqnonlin-LM), the interior point method (lsqnonlin-IP), and the trust region reflective method (lsqnonlin-TR), where the gradients are approximated by the forward finite differences with a step size of about $ 1.5\times 10^{-8} $. The experiments are implemented on a laptop with an Intel Core i9-10920X CPU (3.5 GHz) and 32GB of RAM, using MATLAB R2024a.

In our experiments, the sparse Jacobian model is constructed by solving the noiseless version of \eqref{prob: l1-Jacobian-noisy}, i.e., for $i=1,\ldots, m$, the $\ell_1$-minimization problem
\begin{equation}\label{prob: l1-Jacobian}
	\begin{aligned}
		\min_{g\in\R^n}\ & \norm{g}_1 \\
		\text{s.t.}\ & A(v_k)g = \sigma_k^{-1}y_k^{(i)}
	\end{aligned}
\end{equation}
is solved to obtain $g_k^{(i)}$,
where $A(v_k)$ is generated from one of the distributions \eqref{distribution: Gaussian}--\eqref{distribution: Bernoulli-like},
then $\Jmk$ is constructed by \eqref{jacobianmodel}.
We reformulate \eqref{prob: l1-Jacobian} as a linear programming problem and use the solver Gurobi 11.0.1 to find a minimizer.

In DFLM-SNLS, we set $\eta_0 = 10^{-3}$, $ \eta_1 = 10^{-4}$, $ \eta_2 = 10^{3}$, $ \gamma_1 = 0.25$, $ \gamma_2 = 4$, $ \theta_0 = 10^{-8}$, $ \theta_{\min} = 10^{-8}$, $ \sigma_0 = 1 $ and $ \sigma_k = \max\{10^{-9},\min\{10^{-7},\norm{d_{k-1}}\}\} $ for all tests.
The algorithm is terminated when
\[ \norm{\Jmk^TF_k}\leq 10^{-6},\quad\mbox{or}\quad \norm{d_k}\leq10^{-6},\quad\mbox{or}\quad \frac{\abs{\norm{F_k}^2-\norm{F(x_k+d_k)}^2}}{\norm{F_k}^2+10^{-8}}\leq10^{-6},\]
or the function evaluations exceeds $ 1000(n + 1) $.
For each problem, DFLM-SNLS runs 50 times and gives the average objective function value.
The results for the three distributions seem quite similar,
so we just present those where $ A(v_k) $ is generated from the Bernoulli distribution \eqref{distribution: Bernoulli}.

\begin{example}[\cite{Broyden1965ACO}] \label{ex:1}
	Consider the Broyden tridiagonal function
	\[F^{(i)}(x)=(3-2x_i)x_i -x_{i-1} -2x_{i+1} +1,\quad i=1,\ldots,n,\]
	where $x_0=x_{n+1}=0$. The initial point is $ (-1,\ldots,-1)^T $.
\end{example}

We observe that the Jacobian matrix of this problem exhibits a higher degree of sparsity with the growth in dimension.
So we test $ p\in\left\{\lceil n/2\rceil,~\lceil n/3\rceil,~\lceil n/4\rceil\right\} $ for $ n = 100 $ and $ p\in\left\{\lceil n/6\rceil,~\lceil n/8\rceil,~\lceil n/10\rceil\right\} $ for $ n= 500 $, respectively.
The results are presented in Figure \ref{fig: More1981-30}.
It seems that DFLM-SNLS performs much better than fminunc-BFGS, lsqnonlin-LM, lsqnonlin-TR and lsqnonlin-IP, while lsqnonlin-LM performs almost the same as lsqnonlin-TR, and fminunc-BFGS does not work very well.

\begin{figure}[htbp]
	\centering
	\subfigure[$n=100$]{
		\includegraphics[width=0.45\linewidth]{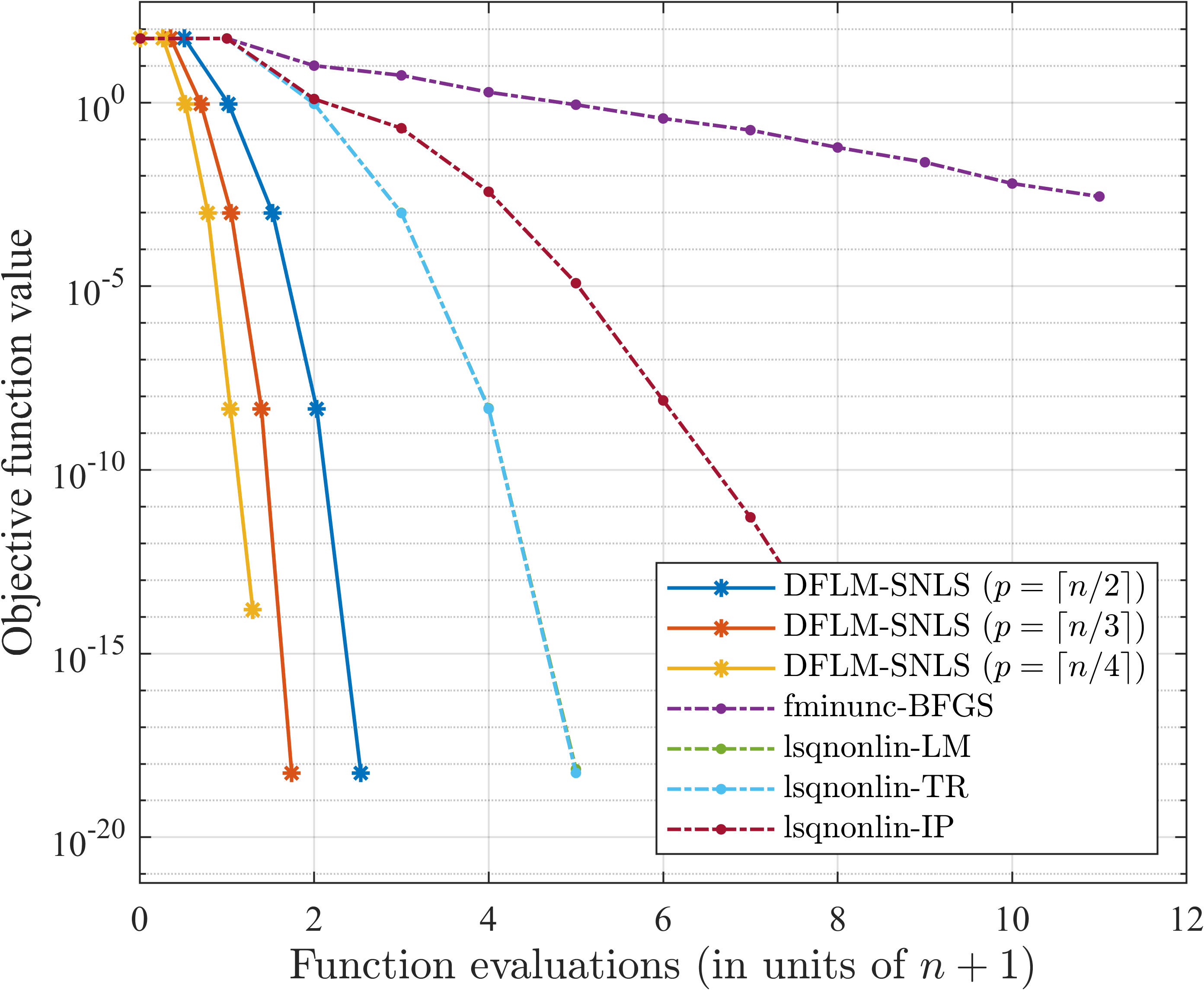}}\quad
	\subfigure[$n=500$]{
		\includegraphics[width=0.45\linewidth]{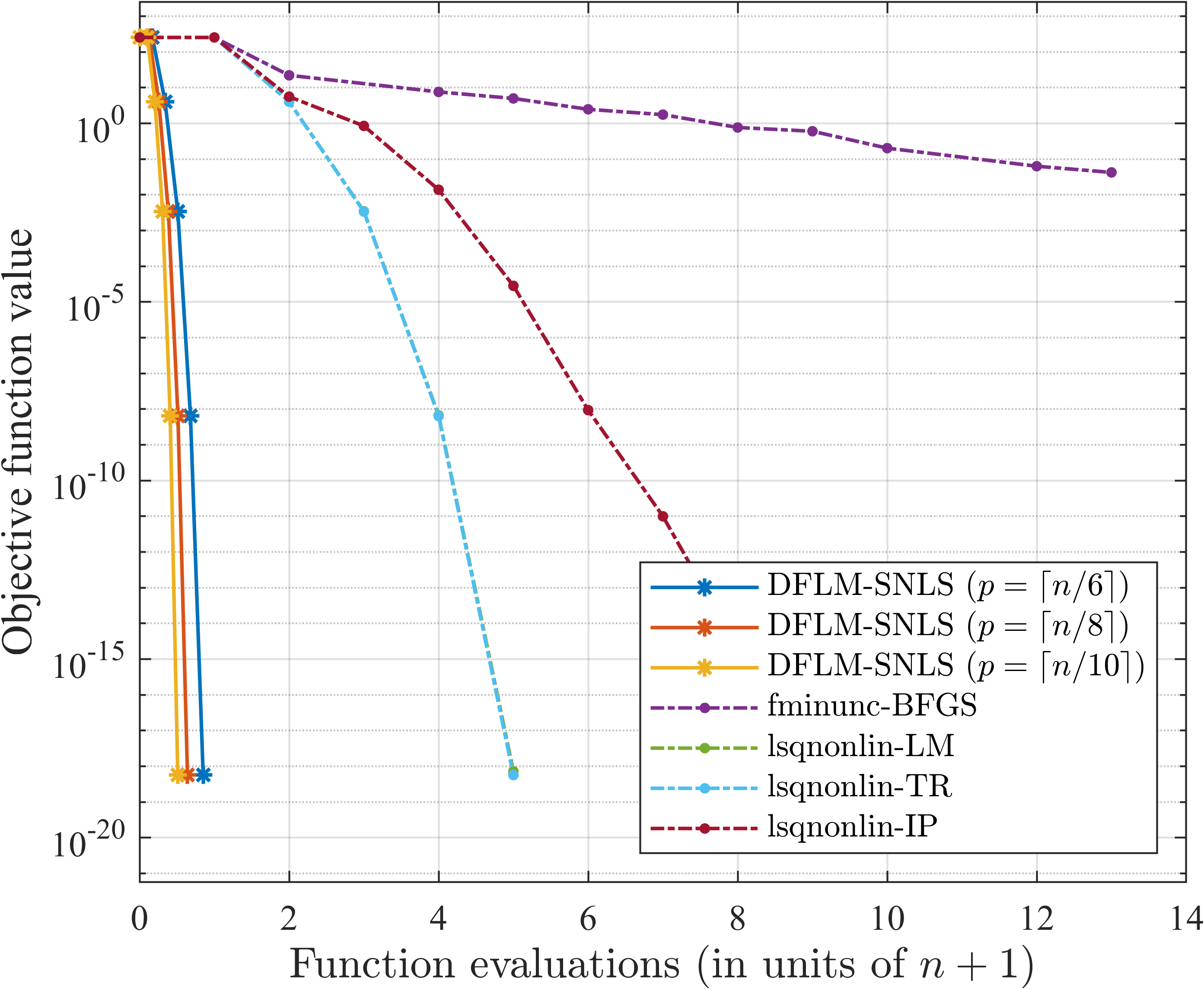}}
	\caption{Results for Example \ref{ex:1} with $n=100$ and $n=500$.}\label{fig: More1981-30}
\end{figure}

\begin{example}[\cite{friedlander1997SolvingNonlinearSystems}]\label{ex:2}
	Consider the tridimensional valley function, for $ i=1,\ldots,n/3 $ ($ n $ is a multiple of 3),
	\[ \begin{aligned}
			F^{(3i-2)}(x) & = (c_2x_{3i-2}^3+c_1x_{3i-2})\exp(-x_{3i-2}^2/100)-1, \\
			F^{(3i-1)}(x) & = 10(\sin(x_{3i-2})-x_{3i-1}), \\
			F^{(3i)}(x) & =10(\cos(x_{3i-2})-x_{3i}),
		\end{aligned} \]
	where $ c_1=1.003344481605351$ and $c_2=-3.344481605351171\times 10^{-3} $. The initial point is $ (-4,1,2,-4,1,2,\ldots)^T $.
\end{example}

We test the algorithms on the problem with $n=102$ and $n=501$.
The results are given in Figure \ref{fig: Friedlander1997-D5}.
DFLM-SNLS also performs best among the algorithms.

\begin{figure}[htbp]
	\centering
	\subfigure[$n=102$]{
		\includegraphics[width=0.45\linewidth]{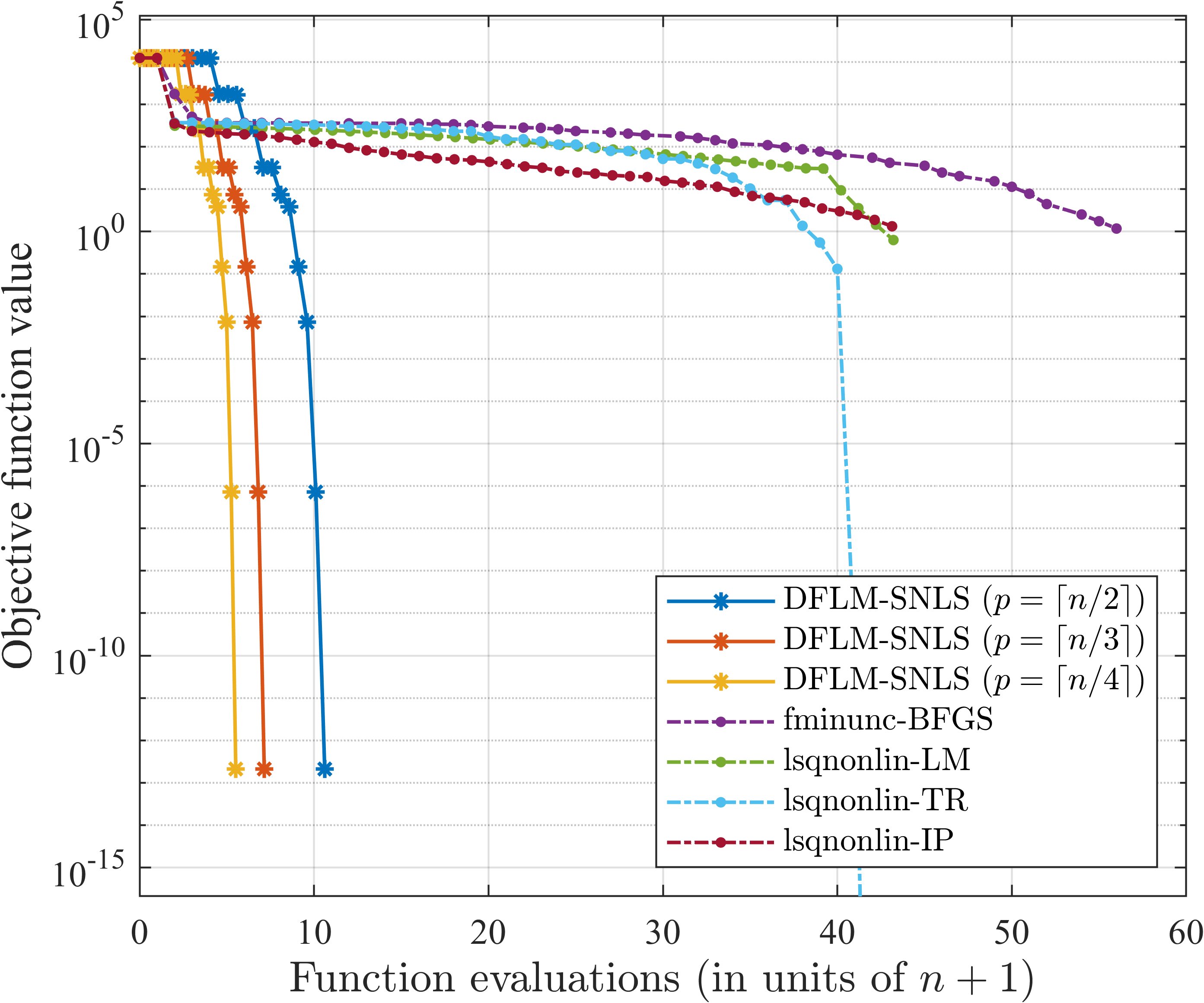}}\quad
	\subfigure[$n=501$]{
		\includegraphics[width=0.45\linewidth]{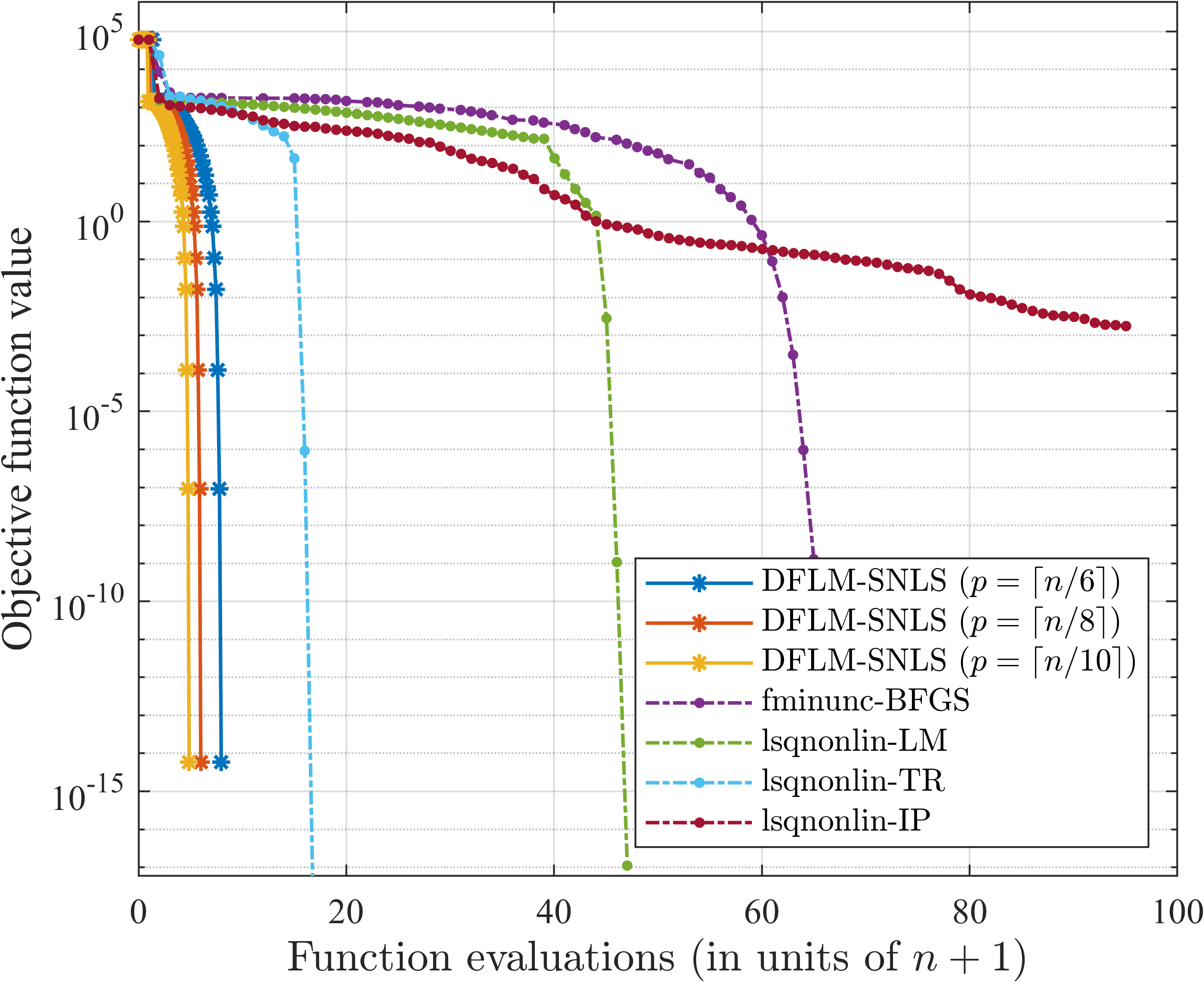}}
	\caption{Results for Example \ref{ex:2} with $n=102$ and $n=501$.}
	\label{fig: Friedlander1997-D5}
\end{figure}

\begin{example}[\cite{Yang1991AnEI}]\label{ex:3}
	Consider the extended Freudenstein and Roth function, for $ i=1,\ldots,n $,
	\[\begin{aligned}
			F^{(i)}(x) & = x_i+((5-x_{i+1})x_{i+1}-2)x_{i+1} -13, \quad \mathrm{mod}(i,2)=1, \\
			F^{(i)}(x) & = x_{i-1}+((x_i+1)x_i-14)x_i -29, \quad \mathrm{mod}(i,2)=0,
		\end{aligned}\]
	where $ \mathrm{mod}(a,b) $ is the remainder of $ a $ divided by $ b $. The initial point is $ (90,60,90,60,\ldots)^T $.
\end{example}

We test the algorithms on the problem with $n=100$ and $n=500$.
The results are given in Figure \ref{fig: Luksan2018-30}.

\begin{figure}[htbp]
	\centering
	\subfigure[$n=100$]{
		\includegraphics[width=0.45\linewidth]{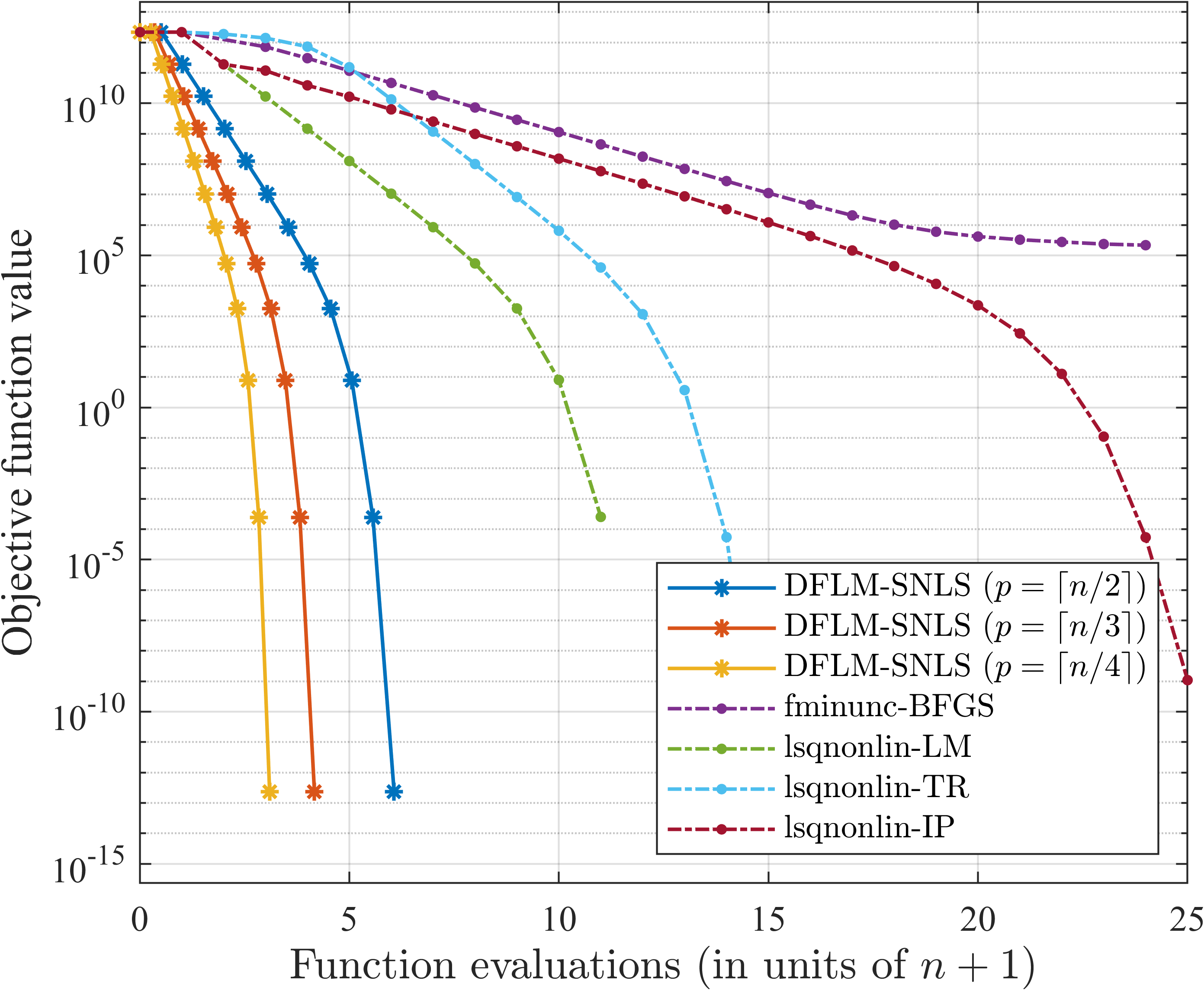}}\quad
	\subfigure[$n=500$]{
		\includegraphics[width=0.45\linewidth]{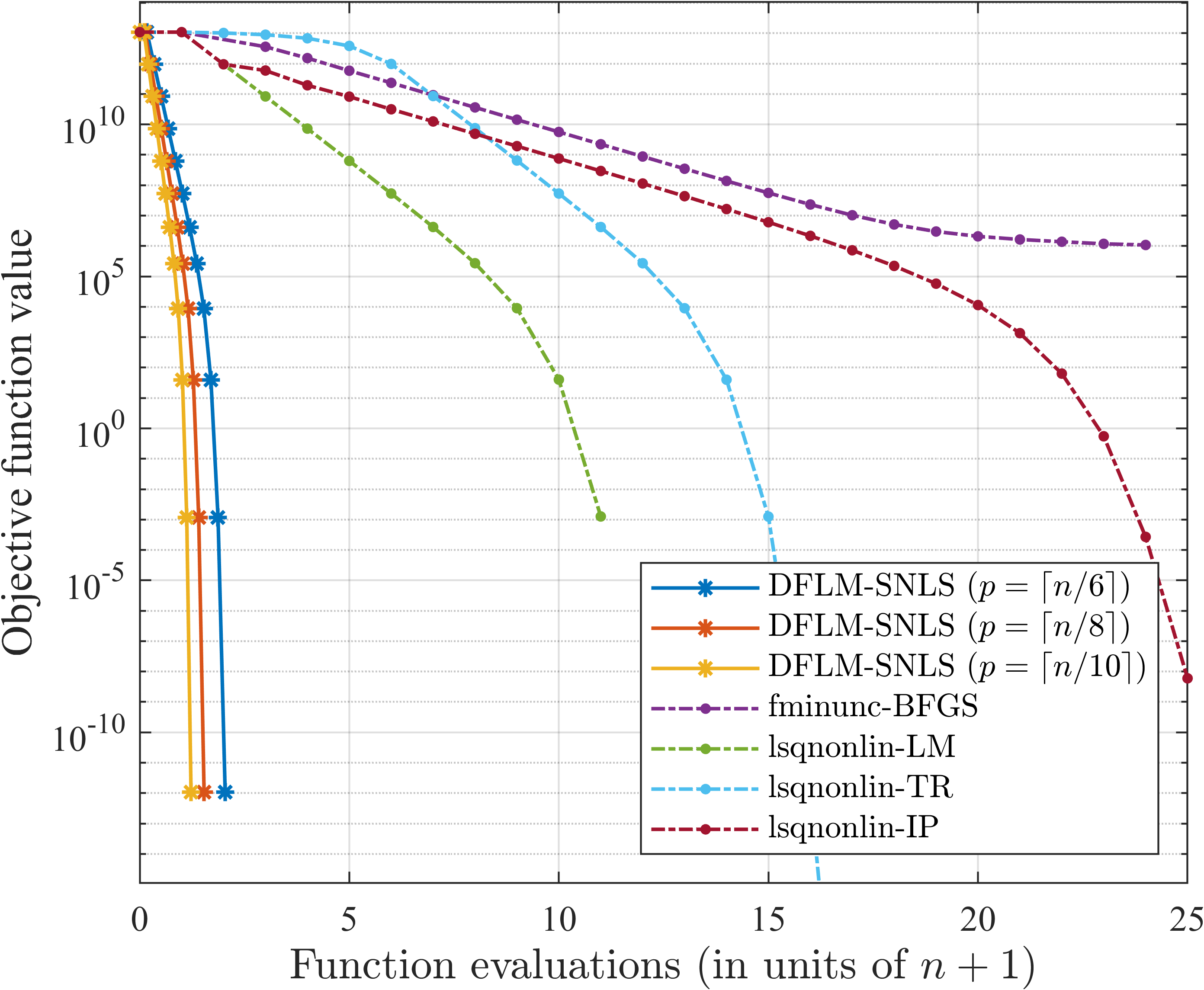}}
	\caption{Results for Example \ref{ex:3} with $n=100$ and $n=500$.}
	\label{fig: Luksan2018-30}
\end{figure}

\begin{example}[\cite{Toint1986NumericalSO}]\label{ex:4}
	Consider the trigonometric system, for $ i=1,\ldots,n $,
	\[ F^{(i)}(x) = 5-(l+1)(1-\cos(x_i))-\sin(x_i)-\sum_{j=5l+1}^{5l+5}\cos(x_j),~l=\lfloor \frac{i-1}{5} \rfloor. \]
	The initial point is $ (1/n,2/n,\ldots,n/n)^T $.
\end{example}

We test the algorithms on the problem with $n=100$ and $n=500$.
The results are given in Figure \ref{fig: Luksan2018-24}.

\begin{figure}[htbp]
	\centering
	\subfigure[$n=100$]{
		\includegraphics[width=0.45\linewidth]{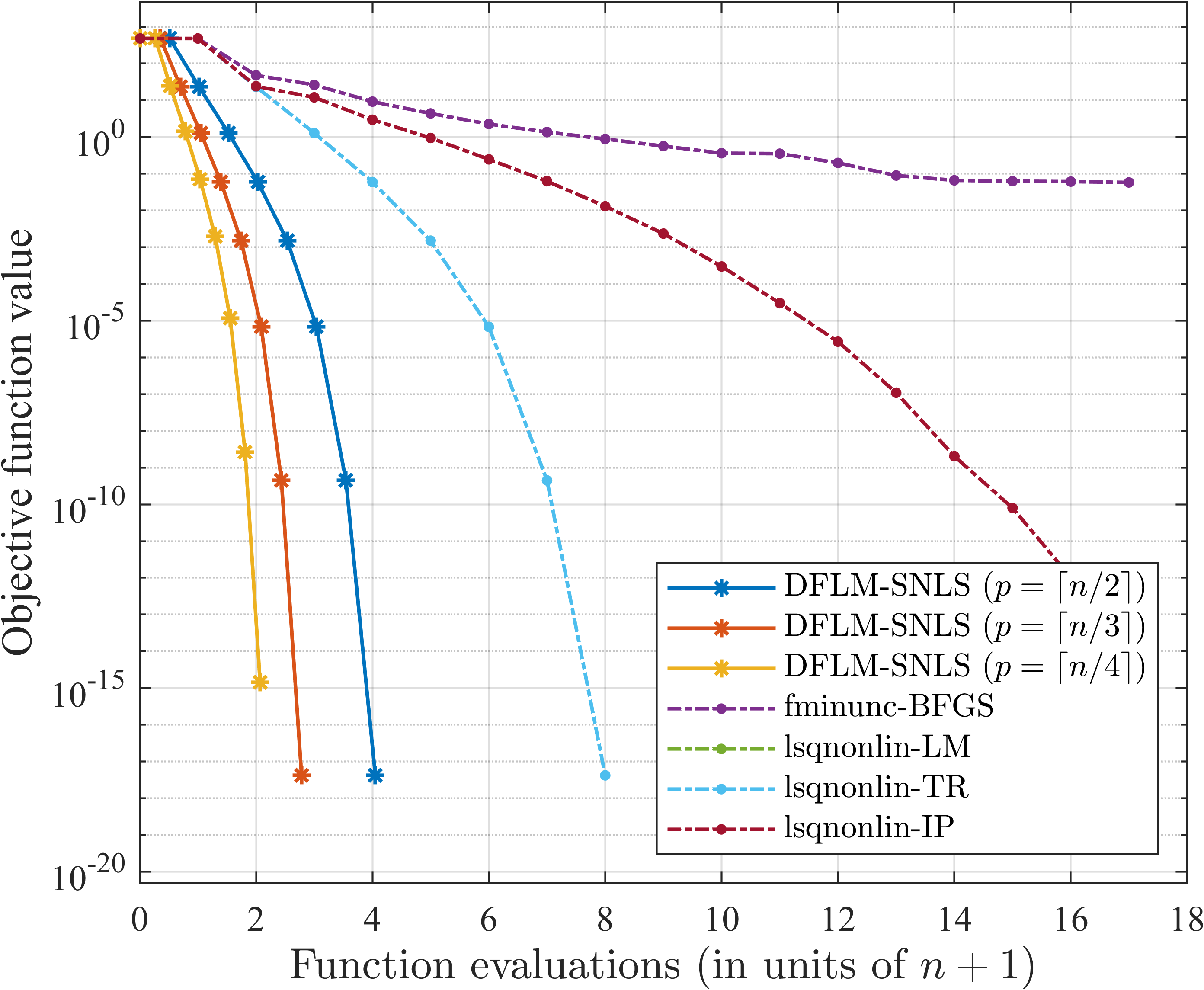}}\quad
	\subfigure[$n=500$]{
		\includegraphics[width=0.45\linewidth]{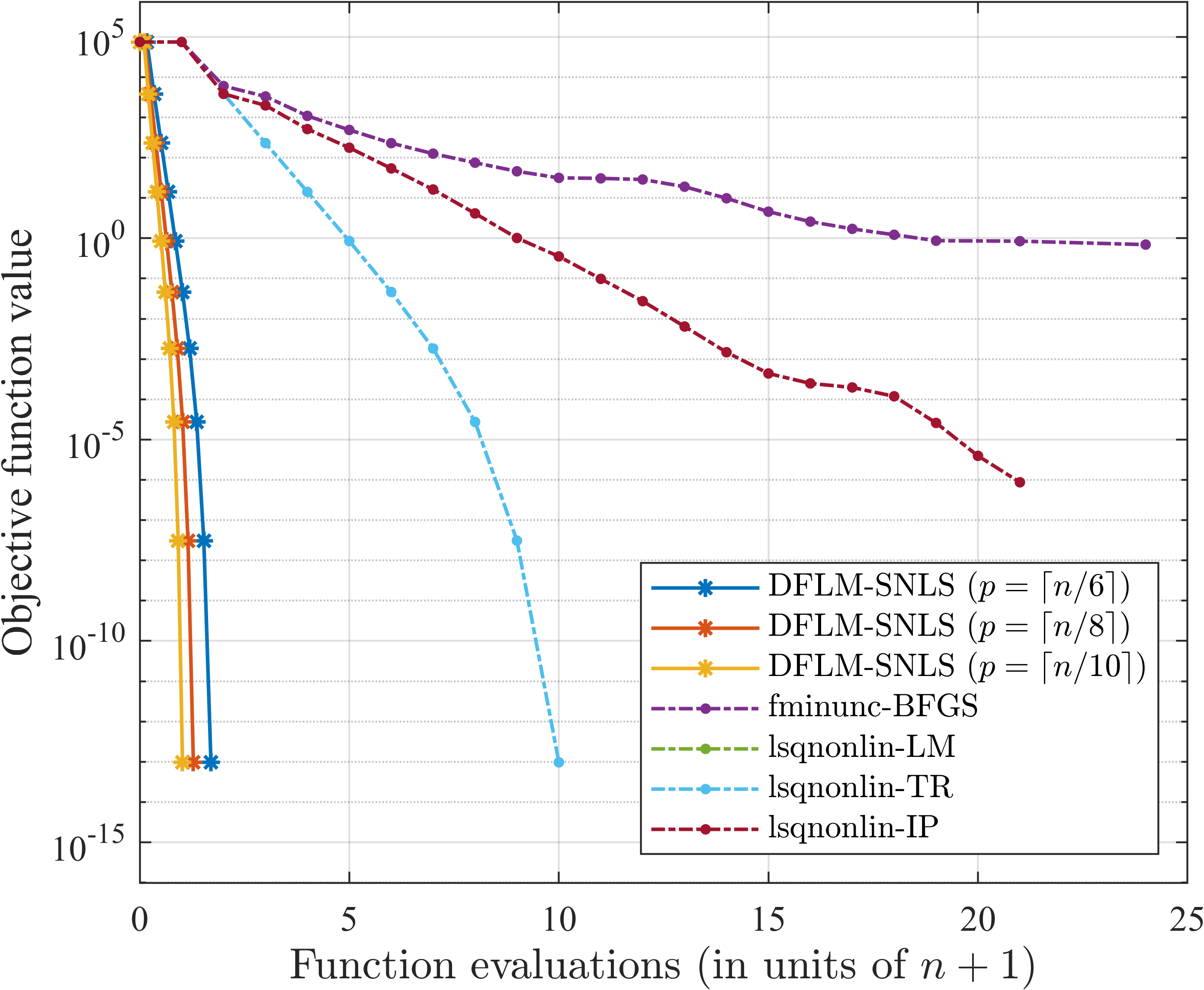}}
	\caption{Results for Example \ref{ex:4} with $n=100$ and $n=500$.}
	\label{fig: Luksan2018-24}
\end{figure}

\begin{example}
	We also test the algorithms on 83 sparse nonlinear least squares problems with $ n \approx 100 $,
	including problems 21, 22, 28, 30, 31 in \cite{more1981TestingUnconstrainedOptimization},
	problems 1--60, 69, 72, 79, 81, 82, 83, 84 in \cite[Section 2]{luksan2018ProblemsNonlinearLeast},
	problems 13, 51, 61, 63, 64 in \cite[Section 3]{luksan2018ProblemsNonlinearLeast}
	and problems 1, 2, 4, 5, 7, 8 in  \cite{friedlander1997SolvingNonlinearSystems}.

\end{example}


We present the numerical results by using the performance profile \cite{dolan2002BenchmarkingOptimizationSoftware}.
Denote by $ \mathcal{S} $ and $ \mathcal{P} $ the sets of algorithms and test problems, respectively.
If a solver $s\in \mathcal{S} $ gives a point $x$ that satisfies
\begin{equation}\label{eq: numerical convergence}
	f(x)\leq \tau f(x_0)+(1-\tau)f_p^*,
\end{equation}
where $ \tau \in(0, 1) $ is an accuracy level and $f_p^*$ is the smallest value of the objective function of problem $p$ obtained by any solver in $ \mathcal{S} $,
we say that $ s\in \mathcal{S} $ solves $ p\in \mathcal{P} $ up to the convergence test.

Let $ N_{s,p} $ be the least number of function evaluations of $F(x)$ required by the solver $s\in \mathcal{S} $ to solve the problem $p\in \mathcal{P}$ up to the convergence test.
If $ s $ fails to satisfy \eqref{eq: numerical convergence} for $ p $ within $ 1000(n+1) $ function evaluations, we let $ N_{s,p}=\infty$.
The performance profile of $s$ is defined as
\begin{equation}
	\pi_s(\alpha) := \frac{\card\left(\Big\{p\in\mathcal{P}\mid N_{s,p}\leq \alpha\cdot\min_{s\in\mathcal{S}}\{N_{s,p}\}\Big\}\right)}{\card(\mathcal{P})}
\end{equation}
for $\alpha\ge1$, where $ \card(\cdot) $ is the cardinality of a set.
Hence, $ \pi_s(\alpha) $ is the proportion of problems that are solved by $ s $ with function evaluations at most $ \alpha $ times of $\min_{s\in\mathcal{S}}\{N_{s,p}\} $.
In particular, $ \pi_s(1) $ is the proportion of problems that $s$ performs better than any other solvers in $ \mathcal{S} $
and $ \lim_{\alpha\rightarrow\infty}\pi_s(\alpha)$ is the proportion of problems that can be solved by $ s $.

Figures \ref{fig: More1981-30}--\ref{fig: Luksan2018-24} show that the choice of $ p $ has an important impact on the efficiency of the algorithm. A large $ p $ may increase unnecessary function evaluations, while a small $ p $ may lead to inaccurate Jacobian estimations. Hence, we develop an adaptive strategy for selecting $ p $: If $ d_k $ is accepted, then $ p $ is updated to $ \max\{p_{\min},\min\{p_{\max},p+p_{\mathrm{diff}}\}\} $; otherwise, $ p $ is updated to $ \max\{p_{\min},\min\{p_{\max},p-p_{\mathrm{diff}}\}\} $.
In the subsequent experiments, for the $ p $-adaptive version, we initially set $ p = \lceil n/3\rceil $, and take $ p_{\min} = \lceil n/4\rceil $, $ p_{\max} = \lceil n/2\rceil $ and $ p_{\mathrm{diff}} = \lceil n/10\rceil $.

The results are given in Figure \ref{fig: performance profile}, where $\alpha$ is displayed in $\log2$-scale and the tolerance $ \tau $ are taken as $ 10^{-2} $, $ 10^{-4} $, $ 10^{-6} $ and $ 10^{-8} $, respectively.
As shown, DFLM-SNLS almost always performs better than other algorithms.
DFLM-SNLS ($ p=\lceil n/4\rceil $) performs best on at least 60\% of the problems for all $ \tau $,
	followed by DFLM-SNLS ($ p$-adaptive), which performs best on at least 35\% of the problems.
Note that a larger $ \alpha $ allows for more function evaluations.
We observe that DFLM-SNLS ($ p=\lceil n/2\rceil $), DFLM-SNLS ($ p=\lceil n/3\rceil $)  and DFLM-SNLS ($ p$-adaptive)  can solve a slightly higher proportion of problems than DFLM-SNLS ($ p=\lceil n/4\rceil $) as $ \alpha $ increases,
and DFLM-SNLS can solve almost all problems when $ \alpha $ is larger.

\begin{figure}[htbp]
	\captionsetup{justification=raggedright, singlelinecheck=false}
	\centering
	\subfigure[$\tau=10^{-2} $]{
	\includegraphics[width=0.45\linewidth]{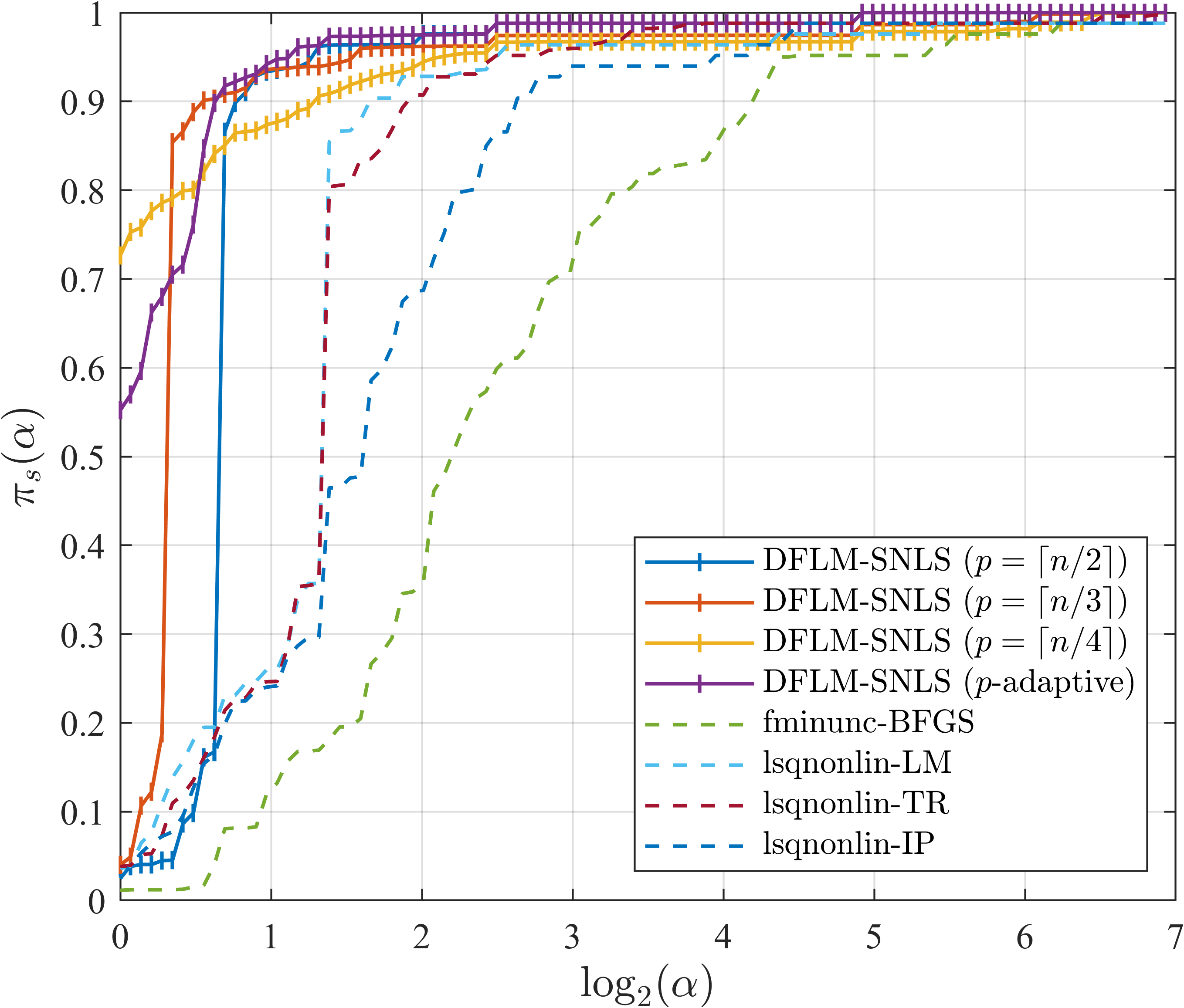}}\quad
	\subfigure[$\tau=10^{-4} $]{
	\includegraphics[width=0.45\linewidth]{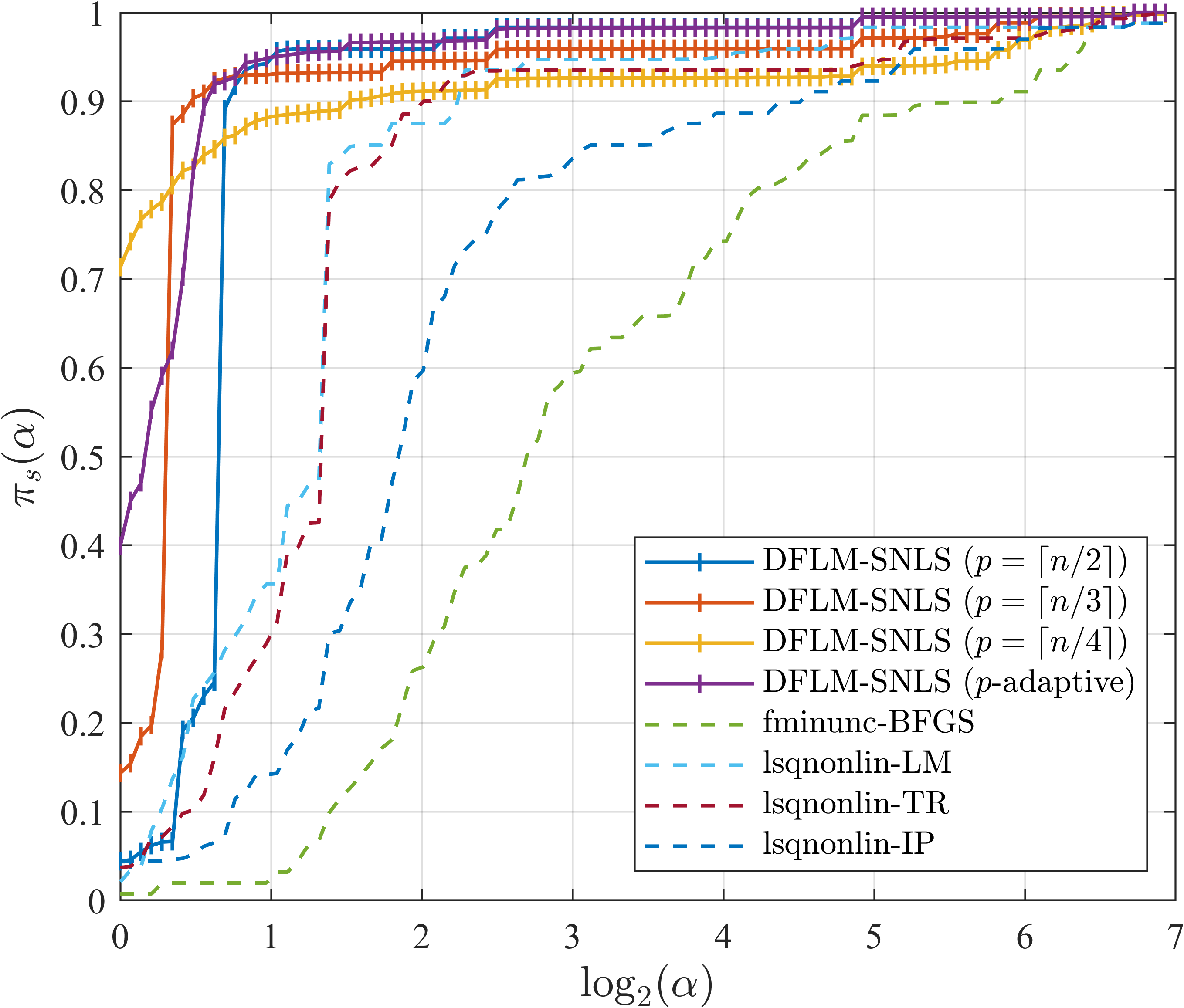}}
	\\
	\subfigure[$\tau=10^{-6} $]{
	\includegraphics[width=0.45\linewidth]{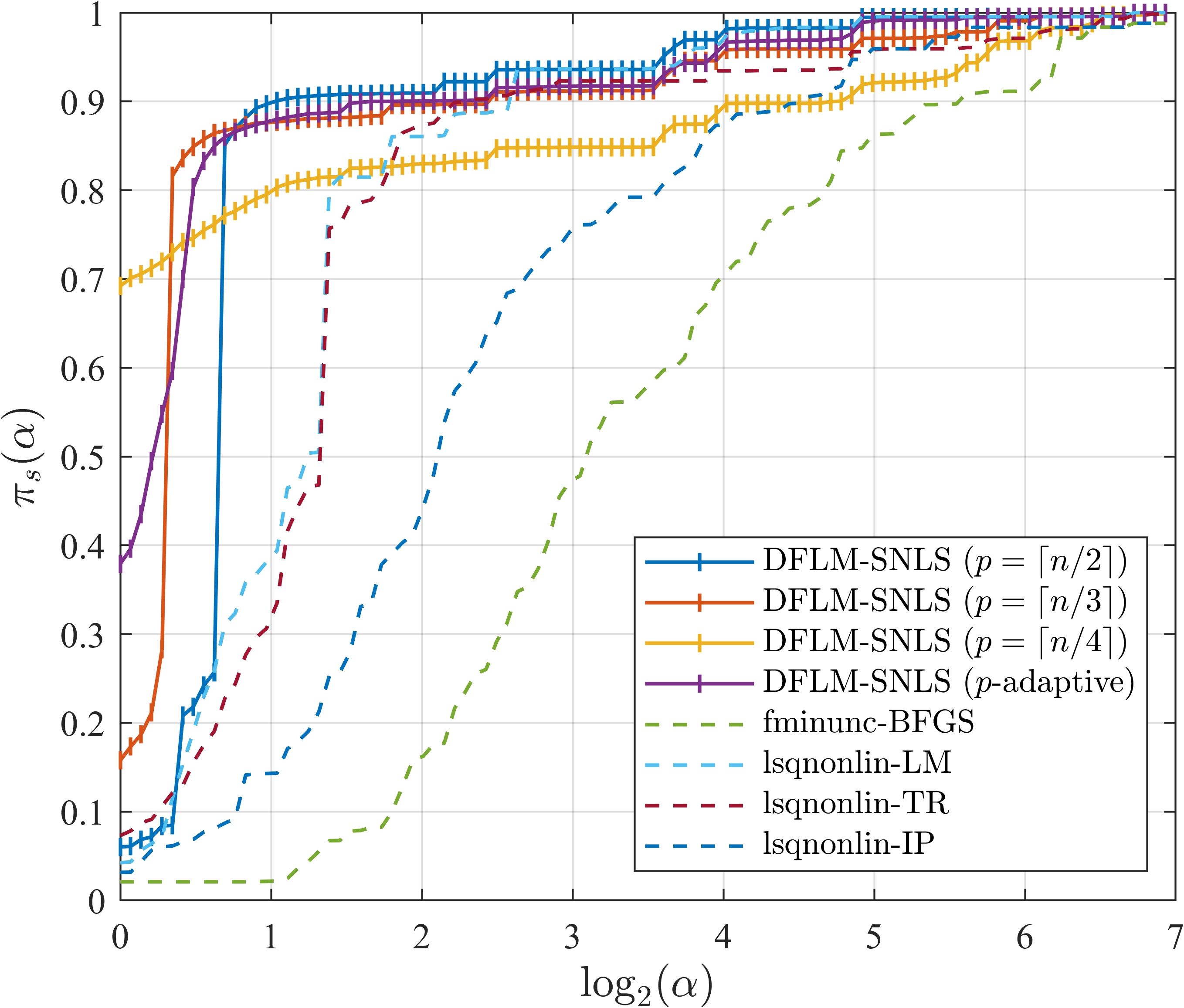}}\quad
	\subfigure[$\tau=10^{-8} $]{
	\includegraphics[width=0.45\linewidth]{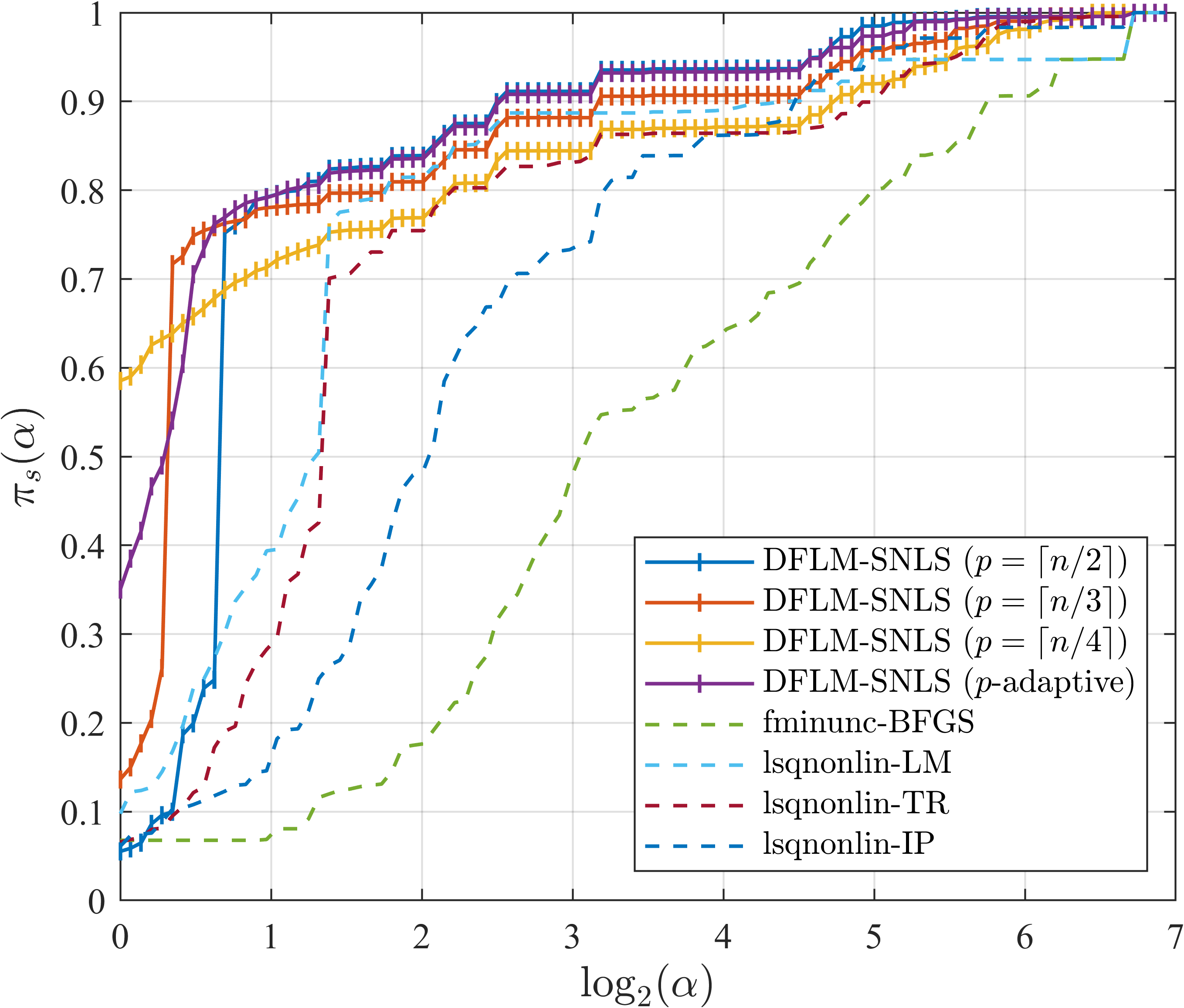}}
	\caption{Performance profiles on sparse nonlinear least squares problems in different accuracy levels}
	\label{fig: performance profile}
\end{figure}

\end{document}